\newtheorem{theorem}{Theorem}[section]
\newtheorem{lemma}[theorem]{Lemma}
\newtheorem{claim}[theorem]{Claim}
\newtheorem{definition}[theorem]{Definition}
\newtheorem*{remark}{Remark}
\newtheorem{example}[theorem]{Example}
\newtheorem*{result}{Result}
\newtheorem*{proof}{Proof}
\newcommand{\BC}{\mathbb{C}}
\newcommand{\BZ}{\mathbb{Z}}
\newcommand{\fg}{\mathfrak{g}}
\newcommand{\fh}{\mathfrak{h}}
\newcommand{\fn}{\mathfrak{n}}
\DeclareMathOperator{\End}{End}
\newcommand{\SL}{\mathfrak{sl}}
\newcommand{\GL}{\mathfrak{gl}}
\DeclareMathOperator{\str}{sTr}
\DeclareMathOperator{\Tr}{Tr}
\title{On the Lie superalgebra $\GL(m|n)$ weight system}
\author{Zhuoke Yang\thanks{Higher School of Economics, partially supported by International Laboratory of Cluster Geometry NRU HSE, RF Government grant, ag. № 075-15-2021-608 dated 08.06.2021}
\\ \textsf{zetadkyzk@gmail.com}}
\date{}
\begin{document}
\maketitle
\begin{abstract}
To a finite type knot invariant, a weight system can be associated,
which is a function on chord diagrams satisfying so-called $4$-term relations.
In the opposite direction, each weight system determines a finite type
knot invariant. In particular, a weight system can be associated to any
metrized Lie algebra,
and any metrized Lie superalgebra. However, computation of these weight systems
is complicated.
In the recent paper by the present author, an extension of
the $\GL(N)$-weight system to arbitrary permutations is defined, which
allows one to develop a recurrence relation for an efficient computation of its values.
In addition, the result proves to be universal, valid for all values of~$N$
and allowing thus to define a unifying $\GL$-weight system taking values in the ring of polynomials
in infinitely many variables $C_0=N,C_1,C_2,\dots$.
In the present paper,
we extend this construction to the weight system associated to the Lie superalgebra $\GL(m|n)$.
Then we prove that the $\GL(m|n)$-weight system is equivalent to the $\GL$-one,
under the substitution $C_0=m-n$.
\end{abstract}

\section{Introduction}
In V.~A.~Vassiliev's theory of finite type knot invariants~\cite{vassiliev1990cohomology},
a weight system can be associated to each such invariant.
A weight system is a function on chord diagrams satisfying so-called
$4$-term relations.

In the opposite direction, according to a Kontsevich theorem~\cite{kon1993},
to each weight system taking values in a field of characteristic~$0$,
a finite type knot invariant can be associated in a canonical way.
This makes studying weight systems an important part of knot theory.

There is a number of approaches to constructing weight systems. In particular,
a huge class of weight systems can be constructed from metrized finite dimensional Lie algebras.
In spite of the fact that the construction is straightforward,
explicit computations are elaborative, and until recently no efficient
way to implement them was known.
In a recent paper~\cite{ZY2}, the present author, following a suggestion of M.~Kazarian,
extended the weight system corresponding to the Lie algebra~$\GL(N)$
to arbitrary permutations, which allowed for proving a recurrence relation
for it, whence computing its values explicitly.
By means of the recurrence relation, we have defined a universal $\GL$-weight system,
which contains in itself all the $\GL(N)$-weight systems, for arbitrary~$N$.

In the present paper, we do a similar thing  for
the weight system corresponding to the Lie superalgebra $\GL(m|n)$.
We prove that it is a specialization of the $\GL$-weight system, for $C_0=m-n$.

The original references to the Lie superalgebras can be found in~\cite{Kac}.
Weight systems arising from Lie superalgebras are defined in~\cite{Va}.
The straightforward approach to computing the values of a Lie superalgebra weight system
on a general chord diagram amounts to elaborating calculations in the noncommutative
universal enveloping algebra, in spite of the fact that the result belongs to the
center of the latter. This approach is rather inefficient even for the simplest
noncommutative Lie superalgebra $\GL(1|1)$. For this Lie Superalgebra, however, there is
a recurrence relation due to Figueroa-O’Farrill, T.~Kimura and A.~Vaintrob~\cite{FKV}.
Much less is known about other Lie superalgebras; the goal of the present paper is to
establish an efficient way to compute the $\GL(m|n)$-weight system, for arbitrary~$m$ and~$n$,
and to prove that it is equivalent to the $\GL$-weight system.

The approach is based on defining an invariant of permutations taking values in the center
of the universal enveloping algebra of $\GL(m|n)$. The restriction of this invariant
to involutions without fixed points (such an involution determines a chord
diagram) coincides with the value of the $\GL(m|n)$-weight system on this chord diagram.
We prove the recursion for $\GL(m|n)$-weight system, which is the same as the recursion for the $\GL$-one with $C_0=m-n$.

The paper is organized as follows.
In Sec.~\ref{sec:Lie}, we recall necessary information about Lie superalgebras,
especially, about the Lie superalgebra $\GL(m|n)$.
In Sec.~\ref{sec:cho}, we review the general definitions of Lie algebra/Lie superalgebra weight systems.
In Sec.~\ref{sec:rev}, we review the $\GL$-weight system and the recurrence rule we have introduced in our previous paper.
Sec.~\ref{sec:gen} is devoted to the definition of the extension of the $\GL(m|n)$-weight system to
permutations and calculations of some small examples.
In Sec.~\ref{sec:pro}, we prove another main theorem, which asserts that the $\GL(m|n)$-weight system is equivalent to the $\GL$-one.

The author is grateful to M.~Kazarian G.~I.~Olshanskii and A.~N.~Sergeev for valuable suggestions,
and to S.~Lando for permanent attention.

\section{Lie superalgebras}\label{sec:Lie}
First we recall the notion of Lie superalgebra, more details can be found in \cite{Kac}.
Everywhere in the paper, the ground field is $\mathbb{C}$, the field of complex numbers.

A {\em super vector space}, or a {\em $\BZ_2$-graded vector space}, is
a vector space decomposed as a direct sum $$V=V_0\oplus V_1.$$ The
indices (or {\em degrees}) 0 and 1 are thought of as elements of
$\BZ_2$; $V_0$ is called the {\em even} part of $V$, and $V_1$ is the
{\em odd} part of $V$. An element $x\in V$ is {\em homogeneous} if
it belongs to either $V_0$ or $V_1$. For $x$ homogeneous, we write
$|x|$ for the degree of $x$. The (super) dimension of $V$ is the
pair $(\dim V_0\, |\, \dim V_1)$ also sometimes written as $\dim V_0+ \dim
V_1$.

The vector space $\GL(V)$ of all endomorphisms of a super vector space $V$
is a super vector space itself: the subspace $\GL(V)_i$, $i=0,1$, consists of maps $f:V\to
V$ such that $f(V_j)\subseteq V_{j+i}$; each $f\in \GL(V)$ can be
written as a sum $f_0+f_1$ with $f_i\in \GL(V)_i$. If $V$ is
finite-dimensional, then the {\em supertrace} of $f$ is defined as
$${\rm sTr} f= \Tr f_0 - \Tr f_1.$$

A {\em superalgebra} is a super vector space $A$ together with a
bilinear product which respects the degree:
$$|xy|=|x|+|y|$$ for all homogeneous $x$ and $y$ in $A$. The {\em
supercommutator} in a superalgebra $A$ is a bilinear operation
defined on homogeneous $x,y\in A$ by
$$[x,y]=xy-(-1)^{|x|\,|y|}yx.$$ The elements of $A$ whose
supercommutator with the whole of $A$ is zero form the {\em super
center} of $A$.

The supercommutator satisfies the following identities:
$$|[x,y]|=|x|+|y|,$$
$$[x,y]=-(-1)^{|x|\,|y|}[y,x]$$
and
$$(-1)^{|z|\,|x|}[x,[y,z]]+(-1)^{|y|\,|z|}[z,[x,y]]+(-1)^{|x|\,|y|}[y,[z,x]]=0,$$
where $x,y,z$ are homogeneous. A super vector space with a bilinear
bracket $[\cdot,\cdot]$ satisfying these identities is called a {\em Lie
superalgebra}.\index{Lie superalgebra}

Each Lie superalgebra $\fg$ can be thought of as a subspace of its
{\em universal enveloping superalgebra} $U(\fg)$ defined as the
quotient of the tensor algebra $T(\fg)$ by the ideal $J$ generated by the elements of the form
$$x\otimes y- (-1)^{|x|\, |y|} y\otimes x-[x,y],$$
where $x$ and $y$ are arbitrary homogeneous elements of $\fg$.

Note that the tensor algebra $T(\fg)$ inherits
a $\BZ_2$-grading from $\fg$. Since the ideal $J$ is generated by homogeneous elements, it follows that $U(\fg)$ is also $\BZ_2$-graded and
the supercommutator in $U(\fg)$ induces the bracket in $\fg$.
We remark that  $ZU(\fg)$, the {\em center} of $U(\fg)$, can be defined as follows
\[
  ZU(\fg)=\{x\in U(\fg)|xy=yx \text{ for all } y\in \fg\}.
\]

The theory of Lie superalgebras was developed by V.~Kac \cite{Kac}; it closely parallels the usual Lie theory.

Let $\fg = \GL(m|n)$ be the vector space of all block square $(m+n)\times(m+n)$-matrices of the form
\begin{equation*}
X=\begin{bmatrix}A&B\\C&D\end{bmatrix},\label{eq:ma}
\end{equation*}
where $A$ is a square $m\times m$-matrix and $D$ is a square $n \times n$-matrix. Let $\fg_{0}$ denote the subspace
 of all such matrices with $B = C = 0$ and $\fg_{1}$ the subspace
  of all such matrices with $A = D = 0$. Then $\fg = \fg_{0}\oplus \fg_{1}$ is a $\BZ_2$-graded associative algebra with respect to ordinary matrix multiplication,
and $\fg_{i}$ is the set of all homogeneous elements of degree $i$, $i=0,1$. Elements of $\fg_{0}$ are called {\it even}, those of $\fg_{1}$ {\it odd}. Throughout what follows, if $|a|$ occurs in an expression, then it is assumed that $a$ is homogeneous,
so that $|a|=0$ provided~$a$ is even, and $|a|=1$ if~$a$  is odd.
The vector space $\GL(m|n)$ becomes a Lie superalgebra where the bracket is defined in terms of the usual matrix product by
\begin{equation*}
  [a,b]=ab-(-1)^{|a||b|} ba.
\end{equation*}

Similarly, if $V = V_0 \oplus V_1$ is a $\BZ_2$-graded vector space, then $\End(V)$ becomes a Lie superalgebra which we denote $\GL(V)$. If $\dim V_0 = m$ and $\dim V_1 = n$, then by choosing a homogeneous basis, we identify $\GL(V) \cong \GL(m|n)$.

If $X$ is as in (\ref{eq:ma}), we define the {\it supertrace} of $X$, denoted $\str(X)$, by
\begin{equation*}
\str(X)=\Tr(A)-\Tr(D)
\end{equation*}
and the bilinear form $\langle \cdot, \cdot\rangle$ on $\GL(m|n)$ is defined as $\langle a, b\rangle = \str(ab)$. Clearly, this bilinear form is nondegenerate.

We denote by $E_{ij}, i, j = 1,\dots,m + n$, the standard basis of the Lie superalgebra $\GL(m|n)$
consisting of matrix units.
The $\BZ_2$-grading on $\GL(m|n)$ is defined by $|E_{ij}|= \bar i +\bar j$,
where $\bar i$ is an element of $\BZ_2$ which equals $\bar 0$ or $\bar 1$
depending on whether $i \le m$ or $i > m$. The commutation relations in this basis are given by
\begin{equation*}
  [E_{ij},E_{kl}]=\delta_{jk}E_{il}-(-1)^{(\bar i +\bar j)(\bar k +\bar l)}\delta_{il}E_{kj}.
\end{equation*}

Below, we recall the result about the  images of the Casimir elements $C_k\in U(\GL(m|n))$
under the Harish-Chandra isomorphism related with supersymmetric functions.

\begin{definition}[supersymmetric functions]
The ring of {\em supersymmetric functions} $S(x_1,\dots,x_m|x_{m+1},\dots,x_{m+n})$ is defined as the subring of $\BC[x_1,\dots,x_{m+n}]$ generated by the homogeneous generators $p_k$ given by $$p_k=\sum _{i=1}^m x_i^k-(-1)^k \sum _{j=m+1}^{m+n} x_j^k.$$
\end{definition}

We have $f\in S(x_1,\dots,x_m|x_{m+1},\dots,x_{m+n})$ iff $f$ is symmetric separately in
$x_1,\dots,x_m$ and $x_{m+1},\dots,x_{m+n}$, and if substituting $x_m=t,x_{m+n}=-t$ in~$f$
provides a function independent of~$t$.

\begin{theorem}[Casimir elements $C_k$ in $U(\GL(m|n))$ \cite{Nwachuku}]\ \\
The center $ZU(\GL(m|n))$ of the universal enveloping algebra of the Lie superalgebra $\GL(m|n)$ is a polynomial algebra generated by the {\it Casimir elements} $C_k$, $k=1,2,\dots$, defined as
\[
  C_k=\sum_{i_1,\dots,i_k}^{m+n}(-1)^{\bar i_2+\bar i_3\dots+\bar i_k}E_{i_1i_2}E_{i_2i_3}\dots E_{i_{k-1}i_k}E_{i_ki_1},
\]
where we omit the tensor product sign $\otimes$ between the matrix units.
\end{theorem}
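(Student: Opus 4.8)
The plan is to establish the statement in two parts: that each $C_k$ lies in $ZU(\GL(m|n))$, and that the $C_k$ generate this center, the latter via the Harish--Chandra homomorphism, which simultaneously identifies $ZU(\GL(m|n))$ with the ring of supersymmetric functions and sends $C_k$ to $p_k$ modulo lower-order terms.

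\emph{Centrality.} Collect the generators into the matrix $\cE=(E_{ij})$ of size $(m+n)\times(m+n)$ with entries in $U(\GL(m|n))$, and, tracking the $\BZ_2$-signs, check that after the diagonal twist $F_{ij}=(-1)^{\bar j}E_{ij}$ one has $C_k=\str(F^k)=\sum_i(-1)^{\bar i}(F^k)_{ii}$. The commutation relations $[E_{ab},E_{cd}]=\delta_{bc}E_{ad}-(-1)^{(\bar a+\bar b)(\bar c+\bar d)}\delta_{ad}E_{cb}$ say precisely that the adjoint action of $E_{ab}$ sends the matrix $F$ to its (super-)commutator with a numerical matrix unit; since the supertrace annihilates such commutators by its cyclicity property, $[E_{ab},\str(F^k)]=0$ for all $a,b$, so $C_k$ is central. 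Carried out index by index this is a short telescoping computation; alternatively one can note that $C_k$ has the same principal symbol as the symmetrization of the conjugation-invariant polynomial $\str(X^k)$ on $\GL(m|n)$.

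\emph{Generation.} Let $hc\colon U(\GL(m|n))\to U(\fh)=S(\fh)$ be the projection along $\fn^-U(\GL(m|n))+U(\GL(m|n))\fn^+$ for the standard triangular decomposition, normalized by the $\rho$-shift so that $E_{aa}\mapsto x_a$; restricted to $ZU(\GL(m|n))$ it is an algebra homomorphism, injective by PBW. I would first show $hc(ZU(\GL(m|n)))\subseteq S(x_1,\dots,x_m\mid x_{m+1},\dots,x_{m+n})$: separate symmetry in $x_1,\dots,x_m$ and in $x_{m+1},\dots,x_{m+n}$ comes from $S_m\times S_n$-invariance of central characters, and the condition that $hc(z)$ be independent of $t$ along $x_m=t,\ x_{m+n}=-t$ follows from the compatibility of $hc$ with the Harish--Chandra homomorphism of $\GL(m-1|n-1)$ under the natural embedding --- this is where the isotropic odd roots enter. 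Next I would compute $hc(C_k)$: normal-ordering a monomial $E_{i_1i_2}E_{i_2i_3}\cdots E_{i_ki_1}$ drops the filtration degree unless the index string is constant, so in top degree only $i_1=\dots=i_k=a$ contributes, yielding $\sum_a(-1)^{(k-1)\bar a}x_a^k=\sum_{a=1}^m x_a^k-(-1)^k\sum_{a=m+1}^{m+n}x_a^k=p_k$, while the remaining terms have lower degree; thus $hc(C_k)=p_k+(\text{lower order})$. Since $S(x\mid y)$ is by definition generated by the $p_k$, an induction on degree using $hc(C_k)\equiv p_k$ gives $S(x\mid y)\subseteq hc(ZU(\GL(m|n)))$; together with the reverse inclusion and injectivity, $hc$ is an isomorphism of $ZU(\GL(m|n))$ onto $S(x\mid y)$ carrying $\{C_k\}$ to a generating set, which is the assertion.

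\emph{Main obstacle.} I expect the delicate step to be the inclusion $hc(ZU)\subseteq S(x\mid y)$ --- concretely, the vanishing of all $t$-derivatives of $hc(z)$ along $x_m=-x_{m+n}$, which is the genuinely ``super'' phenomenon and which I would either derive from the $\GL(m-1|n-1)$-restriction above or cite (Kac, Sergeev). The rest --- centrality, the leading symbol $hc(C_k)\equiv p_k$, and the final degree induction --- is elementary provided the $\rho$-shift and the $\BZ_2$-signs are handled carefully; the signs are the only real source of error.
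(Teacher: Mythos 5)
This theorem is not proved in the paper at all: it is imported wholesale from Nwachuku--Rashid \cite{Nwachuku} (with the Harish--Chandra description appearing separately as Theorem~\ref{th:Casimirs}), so there is no internal proof to compare against. Your outline is the standard route to this result and is essentially sound: the identity $C_k=\str(F^k)$ with $F_{ij}=(-1)^{\bar j}E_{ij}$ checks out (the $(-1)^{\bar i_1}$ from the supertrace cancels the $(-1)^{\bar i_1}$ from the closing factor $F_{i_ki_1}$, leaving exactly the sign $(-1)^{\bar i_2+\cdots+\bar i_k}$ in the statement), and the leading-symbol computation $\phi(C_k)=p_k+(\text{lower order})$ together with the degree induction correctly yields that the $C_k$ generate, given the identification of $\phi(ZU)$ with the supersymmetric functions.

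Two caveats. First, your centrality argument as written invokes ``cyclicity of the supertrace'' for a matrix whose entries are odd elements of $U(\GL(m|n))$; the clean statement is super-cyclicity in $\mathrm{Mat}(m|n)\otimes U(\GL(m|n))$, where $\mathrm{ad}(E_{ab})$ acts as the supercommutator with $e_{ab}\otimes 1$, and the Koszul signs there do not cancel as naively as in the purely even case --- this is exactly where you flag the danger, and it deserves the explicit index-by-index check rather than a one-line appeal. Second, the genuinely hard step --- that $\phi(z)$ for central $z$ is killed by all $t$-derivatives along $x_m=t$, $x_{m+n}=-t$, i.e.\ that $\phi(ZU)\subseteq S(x_1,\dots,x_m|x_{m+1},\dots,x_{m+n})$ --- is deferred to Kac/Sergeev, so your argument is a reduction to the literature rather than a self-contained proof; that is a reasonable division of labour and mirrors what the paper itself does. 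Finally, note that ``polynomial algebra generated by the $C_k$'' must be read as ``generated as an algebra by the $C_k$'': for fixed $m,n$ the $C_k$ are \emph{not} algebraically independent (the paper's own Remark and the $\GL(1|1)$ relation $4C_1C_3=3C_2^2+C_1^4-2C_1^3+C_1^2$ make this explicit), and your conclusion --- that $\phi$ is an isomorphism onto $S(x|y)$ carrying $\{C_k\}$ to the generating set $\{p_k+\text{lower}\}$ --- is the correct precise form of the assertion.
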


Below, we omit the tensor product sign provided this causes no confusion with the matrix product.

The triangular decomposition of the Lie superalgebra $\GL(m|n)$ gives a vector space decomposition
\[
  \fg=\fn_-\oplus\fh\oplus\fn_+
\]
where $\fn_-$ and $\fn_+$ are
the nilpotent subalgebras of, respectively, upper and lower triangular matrices in $\GL(m|n)$,
and $\fh$ is the subalgebra of diagonal matrices.

The universal enveloping algebra $U(\GL(m|n))$ of the Lie superalgebra $\GL(m|n)$ admits
the direct sum decomposition
\[
  U(\GL(m|n))=(\fn_-U(\GL(m|n))+U(\GL(m|n))\fn_+)\oplus U(\fh),
\]
The {\em Harish–Chandra projection} for $U(\GL(m|n))$ is the projection to the second summand
\[
  \phi:U(\GL(m|n))\to U(\fh)=\BC[E_{1,1},E_{2,2},\cdots,E_{m+n,m+n}],
\]
where $E_{1,1},E_{2,2},\cdots,E_{m+n,m+n}$ are the diagonal matrix units in $\GL(m|n)$; they commute with one another.

\begin{theorem}[a reformulation of Eq.~(3.4) in \cite{Nwachuku}]\label{th:Casimirs}
The Harish–Chandra projection, when restricted to the center $ZU(\GL(m|n))$,
is an algebra isomorphism to the polynomial algebra of supersymmetric functions $S(x_1,\dots,x_m|x_{m+1},\dots,x_{m+n})$ in the shifted generators $x_i=E_{ii}+r_i$, where
\[
  r_i=\sum _{j>i} (-1)^{\bar{i}+\bar{j}}+\frac{1}{2} \left(1-(-1)^{\bar{i}}\right).
\]
Explicitly, we have
\[
  1-\sum _{k=0}^{\infty } \phi(C_k) z^{k+1}=\prod _{i=1}^{m+n} \left(1-\frac{z}{1-z (-1)^{\bar{i}} x_i}\right){}^{(-1)^{\bar{i}}}
\]
\end{theorem}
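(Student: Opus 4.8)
The plan is to split the theorem into two parts: (i) that $\phi$ restricted to $ZU(\GL(m|n))$ is an injective algebra homomorphism whose image is the ring of supersymmetric functions in the shifted variables $x_i=E_{ii}+r_i$; and (ii) the explicit generating-function identity for $\phi(C_k)$. Part (i) is the Harish–Chandra theorem for $\GL(m|n)$, and I would reduce it to standard facts. Multiplicativity and injectivity of $\phi$ on the center are the usual highest-weight argument: for a highest-weight vector $v_\lambda$ of weight $\lambda=(\lambda_1,\dots,\lambda_{m+n})$ relative to $\fg=\fn_-\oplus\fh\oplus\fn_+$, every $z\in ZU(\GL(m|n))$ acts on $v_\lambda$ by the scalar obtained from $\phi(z)$ by $E_{ii}\mapsto\lambda_i$, and since $U(\fh)$ is commutative and the assignment $z\mapsto(\lambda\mapsto z|_{v_\lambda})$ separates central elements, $\phi(z_1z_2)=\phi(z_1)\phi(z_2)$ on the center and $\phi$ is injective there. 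For the image: once part (ii) is available, the right-hand product is visibly symmetric separately in $x_1,\dots,x_m$ and in $x_{m+1},\dots,x_{m+n}$, and on substituting $x_m=t$, $x_{m+n}=-t$ the $i=m$ factor $1-\tfrac{z}{1-zt}$ and the $i=m+n$ factor $\bigl(1-\tfrac{z}{1-zt}\bigr)^{-1}$ cancel, so by the criterion stated after the definition of supersymmetric functions each $\phi(C_k)$ lies in $S(x_1,\dots,x_m|x_{m+1},\dots,x_{m+n})$; expanding the logarithm of the product moreover gives $\phi(C_k)=p_k+(\text{lower-order supersymmetric function})$, so the $\phi(C_k)$ generate the same subring as the $p_k$, namely all of $S$. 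Combined with injectivity, this yields the isomorphism, and one reads off that $(r_1,\dots,r_{m+n})$ plays the role of the $\rho$-shift for the chosen positive system.

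Thus the crux is part (ii), and I would establish it by evaluating $\phi(C_k)$ on a generic weight, turning everything into a polynomial identity in $x=\lambda+r$. One route is direct normal ordering: in each monomial $E_{i_1i_2}E_{i_2i_3}\cdots E_{i_ki_1}$ of $C_k$ move the strictly-lower-triangular factors to the left and the strictly-upper ones to the right via $[E_{ij},E_{kl}]=\delta_{jk}E_{il}-(-1)^{(\bar i+\bar j)(\bar k+\bar l)}\delta_{il}E_{kj}$; modulo $\fn_-U(\GL(m|n))+U(\GL(m|n))\fn_+$ only the fully diagonal terms and the corrections produced by the $\delta$'s survive, and organizing this as an induction on $k$ yields a recursion for $\phi(C_k)$ which one matches with the recursion obeyed by the coefficients of the claimed product. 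The route I would actually follow is to invoke the eigenvalue of $C_k$ on a highest-weight vector of weight $\lambda$ — this is Eq.~(3.4) of~\cite{Nwachuku} — then observe that in the variables $x_i=\lambda_i+r_i$ it becomes a power-sum-type expression in which the odd indices enter with the opposite sign, and finally recognize the resulting generating function as $\prod_i\bigl(1-\tfrac{z}{1-z(-1)^{\bar i}x_i}\bigr)^{(-1)^{\bar i}}$: for even $i$ the factor equals the classical Perelomov–Popov ratio $\tfrac{1-z(x_i+1)}{1-zx_i}$, while for odd $i$ it is the reciprocal of the analogous ratio, the reciprocal being exactly the footprint of the supertrace sign. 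Since both sides are polynomial in $\lambda$ and agree on a Zariski-dense set of weights, they agree identically, and as $\phi$ is determined by its values on highest weights the identity holds in $U(\fh)$.

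The main obstacle is the bookkeeping of the $\BZ_2$-signs: along the normal-ordering route one must check that the corrections accumulated from the $\delta$-terms assemble into exactly $r_i=\sum_{j>i}(-1)^{\bar i+\bar j}+\tfrac12(1-(-1)^{\bar i})$ and into the prescribed pattern of exponents $(-1)^{\bar i}$, the genuinely super ingredient being the extra term at odd indices, which has no counterpart in the $\GL(N)$ computation of~\cite{ZY2}. Along the route through~\cite{Nwachuku} the obstacle is milder, being the purely algebraic passage from the explicit eigenvalue sum to the product; I would pin it down with low-order checks — $\phi(C_0)=m-n$ (the superdimension), $\phi(C_1)=\sum_iE_{ii}$, and $\sum_ir_i=\tfrac12(m-n)^2+\tfrac12(n-m)$, the last being forced by the $z^2$-coefficient of the product. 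Once the product formula is established, setting it beside the $\GL(N)$ generating function of~\cite{ZY2} makes the substitution $C_0=m-n$ transparent, which is precisely what the later equivalence theorem requires.
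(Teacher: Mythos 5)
The paper gives no proof of this theorem at all --- it is imported as a reformulation of Eq.~(3.4) of Nwachuku--Rashid --- and your principal route (quoting that eigenvalue formula on highest weights and repackaging it in the shifted variables as the product of Perelomov--Popov-type factors with exponents $(-1)^{\bar i}$, together with the standard highest-weight argument for the Harish--Chandra homomorphism) is exactly the derivation the paper implicitly relies on. Your consistency checks are correct, e.g.\ $\sum_i r_i=\binom{m}{2}+\binom{n}{2}+n-mn=\tfrac12(m-n)^2+\tfrac12(n-m)$, so the proposal is fine as a sketch along the same lines.
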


\begin{remark}
  The center $ZU(\GL(m|n))$ of the universal enveloping algebra of the Lie superalgebra
  $\GL(m|n)$ is not a finitely generated polynomial algebra. However, for fixed $m,n$,
  one can express the variables $x_i$ in terms of the first $m+n$ Casimirs $C_1,\dots,C_{m+n}$.
  After substituting these expressions in the formula above,  one can write the higher Casimirs $C_k$,
 for $k>m+n$, as rational functions in the variables $C_1,\dots,C_{m+n}$.
\end{remark}

\section{Chord diagrams and weight systems}\label{sec:cho}

Below, we use standard notions from the theory of finite order knot invariants;
see, e.g.~\cite{Chmutov2012,lando2013graphs}.

A {\it chord diagram\/} of order~$n$ is an oriented circle (called the \emph{Wilson loop}) endowed with~$2n$
pairwise distinct points split into~$n$ disjoint pairs, considered up to
orientation-preserving diffeomorphisms of the circle.

A~{\it weight system\/} is a function $w$ on chord diagrams satisfying the $4$-term
relations; see Fig.~\ref{fourtermrelation}.

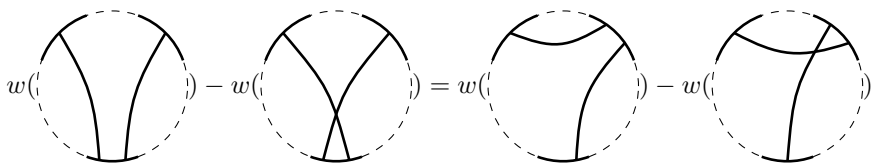
\begin{figure}[ht]
\[
w(\begin{tikzpicture}[baseline={([yshift=-.5ex]current bounding box.center)}]
  \draw[dashed] (0,0) circle (1);
  \draw[line width=1pt]  ([shift=( 20:1cm)]0,0) arc [start angle= 20, end angle= 70, radius=1];
  \draw[line width=1pt]  ([shift=(110:1cm)]0,0) arc [start angle=110, end angle=160, radius=1];
  \draw[line width=1pt]  ([shift=(250:1cm)]0,0) arc [start angle=250, end angle=290, radius=1];
  \draw[line width=1pt] (45:1) ..  controls (5:0.3) and (-40:0.3)  .. (280:1);
  \draw[line width=1pt] (135:1) ..  controls (175:0.3) and (220:0.3)  .. (260:1);
\end{tikzpicture})  -
w(\begin{tikzpicture}[baseline={([yshift=-.5ex]current bounding box.center)}]
  \draw[dashed] (0,0) circle (1);
  \draw[line width=1pt]  ([shift=( 20:1cm)]0,0) arc [start angle= 20, end angle= 70, radius=1];
  \draw[line width=1pt]  ([shift=(110:1cm)]0,0) arc [start angle=110, end angle=160, radius=1];
  \draw[line width=1pt]  ([shift=(250:1cm)]0,0) arc [start angle=250, end angle=290, radius=1];
  \draw[line width=1pt] (45:1) ..  controls (-5:0.1) and (-50:0.1)  .. (260:1);
  \draw[line width=1pt] (135:1) ..  controls (185:0.1) and (225:0.1)  .. (280:1);
\end{tikzpicture})  =
w(\begin{tikzpicture}[baseline={([yshift=-.5ex]current bounding box.center)}]
  \draw[dashed] (0,0) circle (1);
  \draw[line width=1pt]  ([shift=( 20:1cm)]0,0) arc [start angle= 20, end angle= 70, radius=1];
  \draw[line width=1pt]  ([shift=(110:1cm)]0,0) arc [start angle=110, end angle=160, radius=1];
  \draw[line width=1pt]  ([shift=(250:1cm)]0,0) arc [start angle=250, end angle=290, radius=1];
  \draw[line width=1pt] (35:1) ..  controls (0:0.3) and (-45:0.3)  .. (280:1);
  \draw[line width=1pt] (135:1) ..  controls (105:0.5) and (85:0.5)  .. (55:1);
\end{tikzpicture})  -
w(\begin{tikzpicture}[baseline={([yshift=-.5ex]current bounding box.center)}]
  \draw[dashed] (0,0) circle (1);
  \draw[line width=1pt]  ([shift=( 20:1cm)]0,0) arc [start angle= 20, end angle= 70, radius=1];
  \draw[line width=1pt]  ([shift=(110:1cm)]0,0) arc [start angle=110, end angle=160, radius=1];
  \draw[line width=1pt]  ([shift=(250:1cm)]0,0) arc [start angle=250, end angle=290, radius=1];
  \draw[line width=1pt] (55:1) ..  controls (5:0.1) and (-40:0.1)  .. (270:1);
  \draw[line width=1pt] (135:1) ..  controls (105:0.4) and (65:0.4)  .. (35:1);
\end{tikzpicture})
\]
\caption{$4$-term relations}
\label{fourtermrelation}
\end{figure}

In figures, the outer circle of the chord diagram is always assumed to be
oriented counterclockwise. Dashed arcs may contain ends of arbitrary sets
of chords, same for all the four terms in the picture.
Let us recall the construction of the $\GL(N)$-weight system for permutations, as introduced in \cite{ZY2}.

Given a Lie algebra $\fg$ equipped with a non-degenerate invariant bilinear form, one can construct a weight system with values in the center of its universal enveloping algebra $U(\fg)$.
This is the form M. Kontsevich~\cite{kon1993} gave to a construction due to D. Bar-Natan~\cite{bar1995vassiliev}.
Kontsevich’s construction proceeds as follows.

Given a chord diagram $D$ with $n$ chords, we first choose a base point on the circle, away from the ends of the chords of $D$. This gives a linear order on the endpoints of the chords, increasing in the positive direction of the Wilson loop.
We order the chords of $D$ according to the order of their left endpoints. Let us number the chords from $1$ to $n$, and their
endpoints from $1$ to $2n$, in the increasing order. Then $D$ gives a permutation $\sigma_D$ of the
set $\{1,2,\dots,2n\}$ as follows. For $1 \le i \le n$ the
permutation $\sigma_D$ sends $2i-1$ to the (number of the) left endpoint of the $i$th
chord, and $2i$ to the (number of the) right endpoint of the same chord.
The permutation $\sigma_D$ is exactly the re-arrangement, which sends the endpoints of the diagram with $n$
consecutive isolated chords into $D$.

\begin{definition}[Universal Lie algebra weight system]
Let $\fg$ be a metrized Lie algebra over $\mathbb{C}$, that is, a Lie algebra with an ad-invariant non-degenerate bilinear form $\langle\cdot,\cdot\rangle$.
The bilinear form $\langle\cdot,\cdot\rangle$ on $\fg$ is a tensor in $\fg^* \otimes \fg^*$. The algebra $\fg$ being
metrized,  we can identify $\fg^*$ with $\fg$ and think of $\langle\cdot,\cdot\rangle$ as of an element of
$\fg\otimes\fg$. The permutation $\sigma_D$ acts on $\fg^{\otimes2n}$ by interchanging the factors. The
value of the universal Lie algebra weight system $w_\fg (D)$ is the image of
the $n$th tensor power $\langle\cdot,\cdot\rangle^{\otimes n}$ under the map
\[
  \fg^{\otimes2n} \xrightarrow{\sigma_D} \fg^{\otimes2n}\xrightarrow{\phantom{\sigma_D}} U(\fg),
\]
where the second map is the restriction of the natural projection of the tensor algebra on $\fg$
to its universal enveloping algebra.
\end{definition}

This construction of Lie algebra weight systems works also for Lie superalgebras, which are more general than Lie algebras.
Let us recall the definition of the Lie superalgebra weight system on chord diagrams.

If $\fg$ is a metrized Lie superalgebra, the very same construction works
with only one modification: re-arranging the factors in the final step should
be done with certain care. Instead of simply permuting the factors in the
tensor product one should use a representation of the symmetric
group $S_k$ on $k$ letters, which acts on the $k$~th tensor power of any super
vector space.

This representation is defined as follows. Define
\begin{align*}
  S: \fg\otimes\fg&\to\fg\otimes\fg; \\
  S:x\otimes y&\mapsto (-1)^{|x||y|} y\otimes x.
\end{align*}
The map $S$ is an involution; in other words, it defines a representation of the symmetric group $S_2$ on the vector space $\fg^{\otimes2}$. More generally, the representation of $S_k$ on $\fg^{\otimes k}$ is defined by sending the elementary transposition $(i,i + 1)$ to $id^{\otimes i-1} \otimes S \otimes id^{\otimes k-i-1} $, $i=1,\dots,k-1$.

\begin{definition}[Universal Lie superalgebra weight system]
Let $\fg$ be a metrized Lie superalgebra over $\mathbb{C}$, that is, a Lie superalgebra with an ad-invariant non-degenerate bilinear form $\langle\cdot,\cdot\rangle$.
The bilinear form $\langle\cdot,\cdot\rangle$ on $\fg$ is a tensor in $\fg^* \otimes \fg^*$. The algebra $\fg$ being
metrized, we can identify $\fg^*$ with $\fg$ and think of $\langle\cdot,\cdot\rangle$ as of an element of
$\fg\otimes\fg$. The permutation $\sigma_D$ acts on $\fg^{\otimes2n}$ by interchanging the factors. The
value of the universal Lie superalgebra weight system $w_\fg (D)$ is the image of
the $n$th tensor power $\langle\cdot,\cdot\rangle^{\otimes n}$ under the map
\[
  \fg^{\otimes2n} \xrightarrow{S_{2n}\circ\sigma_D} \fg^{\otimes2n}\xrightarrow{\phantom{S_{2n}\circ\sigma_D}} U(\fg),
\]
where the second map is the restriction of the natural projection of the tensor algebra on $\fg$
to its universal enveloping algebra.
\end{definition}

\begin{claim}~{\rm\cite{Chmutov2012,kon1993}} The function $w_\fg :D\mapsto w_\fg (D)$ on chord diagrams has the following properties:
\begin{enumerate}
\item the element $w_\fg (D)$ does not depend on the choice of the base point on the diagram;
\item its image belongs to the ad-invariant subspace
\[
  U(\fg)^{\fg}=\{x\in U(\fg)|xy=yx \text{ for all } y\in \fg\}=ZU(\fg);
\]
\item this map from chord diagrams to $ZU(\fg)$ satisfies the 4-term relations.
\end{enumerate}
\end{claim}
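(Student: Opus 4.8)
The statement is the super-analogue of the classical well-definedness theorem of Kontsevich and Bar--Natan, and the plan is to reproduce the Lie-algebra argument of~\cite{Chmutov2012,kon1993}, inserting Koszul signs wherever two tensor factors of $\fg$ are interchanged. I would begin by recording the three properties of the form tensor on which everything rests. Write $c=\sum_\alpha e_\alpha\otimes e^\alpha\in\fg\otimes\fg$ for the image of $\langle\cdot,\cdot\rangle$ under the identification $\fg^*\cong\fg$, with $\{e_\alpha\}$, $\{e^\alpha\}$ dual homogeneous bases. From $\langle E_{ij},E_{kl}\rangle=\str(E_{ij}E_{kl})=(-1)^{\bar i}\delta_{jk}\delta_{il}$ one sees that the form pairs $E_{ij}$ only with $E_{ji}$, and $|E_{ij}|=\bar i+\bar j=|E_{ji}|$; hence in each term of $c$ the two factors have equal parity, so $c$ is an \emph{even} element of $\fg\otimes\fg$. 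Moreover the supertrace is super-symmetric, $\str(xy)=(-1)^{|x||y|}\str(yx)$, so $S(c)=c$ where $S$ is the super-transposition. Finally, ad-invariance of the form is equivalent to the assertion that $c$ is annihilated by the diagonal action of $\fg$ on $\fg\otimes\fg$, i.e.\ $\sum_\alpha\bigl([y,e_\alpha]\otimes e^\alpha+(-1)^{|y||e_\alpha|}e_\alpha\otimes[y,e^\alpha]\bigr)=0$ for every homogeneous $y\in\fg$.

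For property~(2) I would argue exactly as in the non-super case. First one checks, on the generator $S$, that the diagonal $\fg$-action on $\fg^{\otimes k}$ commutes with the representation of $S_k$ defined in the text --- a short sign computation from the super-Leibniz rule. Hence, $c$ being $\fg$-invariant, so is the tensor $S_{2n}\circ\sigma_D(c^{\otimes n})$. Since the projection $T(\fg)\to U(\fg)$ intertwines the diagonal $\fg$-action with the adjoint action $y\mapsto[y,\cdot\,]$ --- because $[y,\cdot\,]$ is a super-derivation of $U(\fg)$ and therefore obeys the super-Leibniz rule on products --- the image $w_\fg(D)$ is killed by every $\mathrm{ad}(y)$, which is precisely the statement $w_\fg(D)\in U(\fg)^\fg=ZU(\fg)$.

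For property~(1) it suffices to prove invariance under moving the base point across a single endpoint; since the first endpoint is always the left end of chord~$1$, this reduces to comparing the product taken in the cyclic order $1,2,\dots,2n$ with the product taken in the order $2,\dots,2n,1$, for the same chord pairing. Letting $p$ be the position of the right end of chord~$1$ and $A$, $B$ the ordered products in $U(\fg)$ of the factors lying between, resp.\ after, positions $1$ and $p$, the first product is $\sum_\alpha e_\alpha\,A\,e^\alpha\,B$, and the second --- after accounting for the Koszul sign of the cyclic shift, which reduces to $(-1)^{|e_\alpha|}$ since the whole tensor is even and $|e_\alpha|=|e^\alpha|$ --- is $\sum_\alpha(-1)^{|e_\alpha|}A\,e^\alpha\,B\,e_\alpha$. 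One passes from the former to the latter by moving $e_\alpha$ rightward through $A$ one factor at a time: each transposition leaves behind a super-commutator $[e_\alpha,x_k]$, and these corrections cancel in pairs by the ad-invariance identity above (the term produced at one end of a chord annihilates the one produced at its other end), after which $S(c)=c$ absorbs the surviving sign. Equivalently and more conceptually, the construction can be rephrased as a contraction of copies of $c$ along the chords that manifestly depends only on the cyclic arrangement of the $2n$ endpoints, from which (1) is immediate.

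For property~(3), moving one endpoint of a chord past a neighbouring endpoint of a second chord changes $w_\fg$ by exactly the diagram in which those two chords are fused at that spot into a single vertex carrying the super-bracket $[e_a,e_b]$ of the relevant basis vectors --- this is nothing but $e_ae_b-(-1)^{|e_a||e_b|}e_be_a=[e_a,e_b]$ inside $U(\fg)$. Applying this to the four terms of Fig.~\ref{fourtermrelation}, both $w(D_1)-w(D_2)$ and $w(D_3)-w(D_4)$ equal $\pm$ the same ``bracket diagram'', and the two signs coincide because the Koszul factors accompanying a leftward and a rightward slide differ precisely by the super-antisymmetry factor $-(-1)^{|e_a||e_b|}$ relating $[e_a,e_b]$ to $[e_b,e_a]$; hence $w(D_1)-w(D_2)=w(D_3)-w(D_4)$. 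The only genuine work in all of this, and the only departure from the non-super setting, is keeping the Koszul signs consistent across the $S_{2n}$-action, the super-symmetry of $c$, the super-Leibniz rule, and the super-antisymmetry of the bracket; the evenness of $c$ recorded at the outset is what keeps the bookkeeping under control, and it is the point I would be most careful about.
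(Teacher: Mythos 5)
The paper offers no proof of this Claim --- it is quoted directly from \cite{Chmutov2012,kon1993} (with the superalgebra case going back to \cite{Va}) --- and your argument is a correct superization of the standard proof given in those references: invariance of the Casimir tensor $c$ for property (2), cyclic invariance via sliding a basis vector through the product with commutator corrections cancelling by ad-invariance for property (1), and the pairing of bracket terms for the 4-term relation. The sign bookkeeping you flag (evenness of $c$, $S(c)=c$, the super-Leibniz rule) is exactly the content that distinguishes the super case, and you handle it correctly.
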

Therefore, $w_\fg$ is a weight system taking values in $ZU(\fg)$.

\section{Review of the $\GL$-weight system and the Recurrence Rule}\label{sec:rev}
Let us recall the construction of the $\GL(N)$-weight system for permutations, as introduced in \cite{ZY2}.
For a permutation $\sigma\in S_k$, set

\[
  w_{\GL(N)}(\sigma)=\sum_{i_1,\cdots,i_k=1}^N E_{i_1i_{\sigma(1)}}E_{i_2i_{\sigma(2)}}\cdots E_{i_ki_{\sigma(k)}}\in U(\GL(N)).
\]

For example, the standard Casimir generator
$$C_k=\sum^N_{i_1,\cdots,i_k=1}E_{i_1i_2}E_{i_2i_3}\cdots E_{i_{k-1}i_km}E_{i_ki_1}$$
corresponds to the cyclic permutation $1\mapsto2\mapsto\cdots\mapsto k\mapsto1\in S_k$.

It is shown in \cite{ZY2} that
\begin{itemize}
  \item the value of $w_{\GL(N)}$ on any permutation lies in the center of $U(\GL(N))$;
  \item this element is invariant under conjugation by  the standard cyclic permutation, that is

    $w_{\GL(N)}(\sigma)=\sum_{i_1,\cdots,i_k=1}^N E_{i_2i_{\sigma(2)}}\cdots E_{i_ki_{\sigma(k)}}E_{i_1i_{\sigma(1)}}$.
\end{itemize}

\begin{definition}[digraph of a permutation]
  Let us represent  a permutation as an oriented graph.
The $k$ vertices of the graph correspond to the permuted elements.
They are ordered and are placed on a horizontal arrow looking right. The arc arrows
show the action of the permutation (so that each vertex is incident with exactly
one incoming and one outgoing arc edge).
  The digraph of a permutation $\sigma\in S_k$
consists of these~$k$ vertices and $k$ oriented edges, for example:
  \[
  (\left(1\ n+1)(2\ n+2)\cdots(n\ 2n)\right)=\begin{tikzpicture}[baseline={([yshift=-.5ex]current bounding box.center)},decoration={markings, mark= at position .55 with {\arrow{stealth}}}]
    \draw[->,thick] (-2,0)--(2,0);
    \draw[blue,postaction={decorate}] (-1.8,0) ..  controls (-1,.5) ..(.2,0);
    \draw[blue,postaction={decorate}] (-1.4,0) ..  controls (-.6,.5) ..(.6,0);
    \draw[blue,postaction={decorate}] (-.2,0) ..  controls (1,.5) ..(1.8,0);
    \draw[blue,postaction={decorate}] (.2,0) ..  controls (-1,-.5) ..(-1.8,0);
    \draw[blue,postaction={decorate}] (.6,0) ..  controls (-.6,-.5) ..(-1.4,0);
    \draw[blue,postaction={decorate}] (1.8,0) ..  controls (1,-.5) ..(-.2,0);
    \fill[black] (-1.8,0) circle (1pt) node[below] {\tiny 1};
    \fill[black] (-1.4,0) circle (1pt) node[below] {\tiny 2};
    \fill[black] (-0.2,0) circle (1pt) node[below] {\tiny n};
    \fill[black] ( .15,0) circle (1pt) node[below] {\tiny n+1};
    \fill[black] ( .6,0) circle (1pt)  node[below]{\tiny n+2};
    \fill[black] ( 1.8,0) circle (1pt) node[below] {\tiny 2n};
    \node[below] at (-.8,0) {$\cdots$};
    \node[below] at (1.2,0) {$\cdots$};
\end{tikzpicture}
  \]
\end{definition}

\begin{theorem}[\cite{ZY2}]
The value of the $w_{\GL(N)}$ invariant of permutations possesses the following properties:
\begin{itemize}
  \item for the empty graph (with no vertices) the value of $w_{\GL(N)}$ is equal to $1$,
    $w_{\GL(N)}(\textcircled{})=1$;
  \item $w_{\GL(N)}$ is multiplicative with respect to concatenation of permutations;
  \item for a cyclic permutation {\rm(}with the cyclic order on the set of permuted elements  compatible with the permutation{\rm)},
 the value of $w_{\GL(N)}$ is the standard generator,
    $w_{\GL(N)}(1\mapsto2\mapsto\cdots\mapsto k\mapsto1)=C_k$.

\item {\rm(}\textbf{Recurrence Rule}{\rm)} For the graph of an arbitrary permutation $\sigma$ in~$S_k$,
and for any two neighboring elements $l,l+1$, of the permuted set $\{1,2,\dots,k\}$, we have
for the values of the $w_{\GL(N)}$ weight system
\begin{equation*}
  \begin{tikzpicture}[baseline={([yshift=-.5ex]current bounding box.center)},decoration={markings, mark= at position .55 with {\arrow{stealth}}}]
    \draw[->,thick] (-1,0) --  (1,0);
    \fill[black] (-.3,0) circle (1pt) node[below] {\tiny l};
    \fill[black] (.3,0) circle (1pt) node[below] {\tiny l+1};
    \draw (-.5,.8) node[left] {a};
    \draw (-.5,-.8) node[left] {b};
    \draw (.5,.8) node[right] {c};
    \draw (.5,-.8) node[right] {d};
    \draw[blue,postaction={decorate}] (-.5,.8) -- (.3,0);
    \draw[blue,postaction={decorate}] (-.3,0) -- (.5,.8);
    \draw[blue,postaction={decorate}] (-.5,-.8) -- (-.3,0);
    \draw[blue,postaction={decorate}] (.3,0) -- (.5,-.8);
  \end{tikzpicture}-
  \begin{tikzpicture}[baseline={([yshift=-.5ex]current bounding box.center)},decoration={markings, mark= at position .55 with {\arrow{stealth}}}]
    \draw[->,thick] (-1,0) --  (1,0);
    \fill[black] (.3,0) circle (1pt) node[below] {\tiny l+1};
    \fill[black] (-.3,0) circle (1pt) node[below] {\tiny l};
    \draw (-.5,.8) node[left] {a};
    \draw (-.5,-.8) node[left] {b};
    \draw (.5,.8) node[right] {c};
    \draw (.5,-.8) node[right] {d};
    \draw[blue,postaction={decorate}] (-.5,.8) -- (-.3,0);
    \draw[blue,postaction={decorate}] (.3,0) -- (.5,.8);
    \draw[blue,postaction={decorate}] (-.5,-.8) -- (.3,0);
    \draw[blue,postaction={decorate}] (-.3,0) -- (.5,-.8);
  \end{tikzpicture}=
  \begin{tikzpicture}[baseline={([yshift=-.5ex]current bounding box.center)},decoration={markings, mark= at position .55 with {\arrow{stealth}}}]
    \draw[->,thick] (-1,0)  -- (1,0);
    \fill[black] (0,0) circle (1pt) node[above] {\tiny l'};
    \draw (-.5,.8) node[left] {a};
    \draw (-.5,-.8) node[left] {b};
    \draw (.5,.8) node[right] {c};
    \draw (.5,-.8) node[right] {d};
    \draw[blue,postaction={decorate}] (-.5,.8) ..controls (0,.4) .. (.5,.8);
    \draw[blue,postaction={decorate}] (-.5,-.8) -- (0,0);
    \draw[blue,postaction={decorate}] (0,0) -- (.5,-.8);
  \end{tikzpicture}-
  \begin{tikzpicture}[baseline={([yshift=-.5ex]current bounding box.center)},decoration={markings, mark= at position .55 with {\arrow{stealth}}}]
    \draw[->,thick] (-1,0) --  (1,0);
    \fill[black] (0,0) circle (1pt) node[below] {\tiny l'};
    \draw (-.5,.8) node[left] {a};
    \draw (-.5,-.8) node[left] {b};
    \draw (.5,.8) node[right] {c};
    \draw (.5,-.8) node[right] {d};
    \draw[blue,postaction={decorate}] (-.5,-.8) ..controls (0,-.4) .. (.5,-.8);
    \draw[blue,postaction={decorate}] (-.5,.8) -- (0,0);
    \draw[blue,postaction={decorate}] (0,0) -- (.5,.8);
  \end{tikzpicture}
\end{equation*}

In the diagrams on the left, two horizontally neighboring vertices
and the edges incident to them are depicted, while
on the right these two vertices are replaced with a single one;
the other vertices are placed somewhere on the circle and their positions are the same on all
diagrams participating in the relations, but the numbers of the vertices to the right
of the latter are to be decreased by~$1$.

For the special case $\sigma(l+1)=l$, the recurrence looks like follows:
\begin{equation*}
  \begin{tikzpicture}[baseline={([yshift=-.5ex]current bounding box.center)},decoration={markings, mark= at position .55 with {\arrow{stealth}}}]
    \draw[->,thick] (-1,0) --  (1,0);
    \fill[black] (-.3,0) circle (1pt) node[below] {\tiny l};
    \fill[black] (.3,0) circle (1pt) node[below] {\tiny l+1};
    \draw (-.5,.8) node[left] {a};
    \draw (.5,.8) node[right] {b};
    \draw[blue,postaction={decorate}] (-.5,.8) -- (.3,0);
    \draw[blue,postaction={decorate}] (-.3,0) -- (.5,.8);
    \draw[blue,postaction={decorate}] (.3,0) ..controls(0,-.3).. (-.3,0);
  \end{tikzpicture}-
  \begin{tikzpicture}[baseline={([yshift=-.5ex]current bounding box.center)},decoration={markings, mark= at position .55 with {\arrow{stealth}}}]
    \draw[->,thick] (-1,0) --  (1,0);
    \fill[black] (.3,0) circle (1pt) node[below] {\tiny l+1};
    \fill[black] (-.3,0) circle (1pt) node[below] {\tiny l};
    \draw (-.5,.8) node[left] {a};
    \draw (.5,.8) node[right] {b};
    \draw[blue,postaction={decorate}] (-.5,.8) -- (-.3,0);
    \draw[blue,postaction={decorate}] (.3,0) -- (.5,.8);
    \draw[blue,postaction={decorate}] (-.3,0) ..controls(0,-.3).. (.3,0);
  \end{tikzpicture}=C_1\times
  \begin{tikzpicture}[baseline={([yshift=-.5ex]current bounding box.center)},decoration={markings, mark= at position .55 with {\arrow{stealth}}}]
    \draw[->,thick] (-1,0)  -- (1,0);
    \draw (-.5,.8) node[left] {a};
    \draw (.5,.8) node[right] {b};
    \draw[blue,postaction={decorate}] (-.5,.8) ..controls (0,.4) .. (.5,.8);
  \end{tikzpicture}-N\times
  \begin{tikzpicture}[baseline={([yshift=-.5ex]current bounding box.center)},decoration={markings, mark= at position .55 with {\arrow{stealth}}}]
    \draw[->,thick] (-1,0) --  (1,0);
    \fill[black] (0,0) circle (1pt) node[above] {\tiny l'};
    \draw (-.5,.8) node[left] {a};
    \draw (.5,.8) node[right] {b};
    \draw[blue,postaction={decorate}] (-.5,.8) -- (0,0);
    \draw[blue,postaction={decorate}] (0,0) -- (.5,.8);
  \end{tikzpicture}
\end{equation*}
These relations are indeed a recursion, that is, they allow one to
replace the computation of $w_{\GL(N)}$ on a permutation with its computation on simpler permutations.
\end{itemize}

\end{theorem}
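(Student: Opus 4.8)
The plan is to derive all four properties directly from the definition
\[ w_{\GL(N)}(\sigma)=\sum_{i_1,\dots,i_k} E_{i_1 i_{\sigma(1)}}E_{i_2 i_{\sigma(2)}}\cdots E_{i_k i_{\sigma(k)}}, \]
the Recurrence Rule being the only part that requires genuine work. The first three items are essentially immediate. For the empty permutation the defining sum is an empty product, hence $1\in U(\GL(N))$. For a concatenation $\sigma\sqcup\tau$ with $\sigma\in S_k$, $\tau\in S_l$, the index set $\{1,\dots,k+l\}$ splits so that the first $k$ factors involve only $i_1,\dots,i_k$ and precede the last $l$ factors, which involve only $i_{k+1},\dots,i_{k+l}$; relabelling the second block, the sum factors as $w_{\GL(N)}(\sigma)\,w_{\GL(N)}(\tau)$. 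For the cyclic permutation $1\mapsto2\mapsto\cdots\mapsto k\mapsto1$ one has $\sigma(i)=i+1$ for $i<k$ and $\sigma(k)=1$, so the defining sum is literally the Casimir $C_k$.

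For the Recurrence Rule, fix $\sigma\in S_k$ and neighbouring positions $l,l+1$, put $\tau=(l\ l+1)$ and $\sigma'=\tau\sigma\tau$; the digraph of $\sigma'$ is obtained from that of $\sigma$ by interchanging the two vertices $l,l+1$ together with their incident edges, which is exactly the passage from the first to the second diagram on the left-hand side. First I would show, via the change of summation variable $i_m\mapsto i_{\tau(m)}$, that $w_{\GL(N)}(\sigma')$ equals $w_{\GL(N)}(\sigma)$ with the two adjacent factors $E_{i_l i_{\sigma(l)}}$ and $E_{i_{l+1} i_{\sigma(l+1)}}$ — adjacent because they occupy consecutive positions in the ordered product — written in the opposite order. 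Hence
\[ w_{\GL(N)}(\sigma)-w_{\GL(N)}(\sigma')=\sum_{i_1,\dots,i_k}\cdots\,[E_{i_l i_{\sigma(l)}},E_{i_{l+1} i_{\sigma(l+1)}}]\,\cdots, \]
with all other factors untouched. Substituting the $\GL(N)$ bracket $[E_{ij},E_{kl}]=\delta_{jk}E_{il}-\delta_{il}E_{kj}$ produces two sums; in each, a Kronecker delta identifies two summation indices, and summing it out merges the vertices $l,l+1$ into a single vertex $l'$ and reroutes one incident edge. I would then check that the first (positive) term yields the contracted diagram with the reconnection of the two upper ends $a\to c$ and the merged vertex on the lower strand $b\to l'\to d$, i.e. the first diagram on the right, and that the second (negative) term yields the other one. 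The special case $\sigma(l+1)=l$ is the same computation: one delta is then vacuously satisfied and, after summing the freed index, contributes the factor $N$, while the other leaves $\sum_i E_{ii}=C_1$, giving the displayed special form.

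The main obstacle is bookkeeping rather than conceptual: one must confirm that the index identifications forced by the two deltas match precisely the two vertex-contraction pictures, and that this remains correct in all degenerate configurations of the four neighbouring edges — when some of $a,b,c,d$ coincide, when $\sigma$ fixes $l$ or $l+1$, and so on. I would handle this uniformly by phrasing the contraction as an explicit operation on permutations of the smaller set (for the first term: $b\mapsto l'\mapsto d$, $a\mapsto c$, all other values inherited; for the second: $a\mapsto l'\mapsto c$, $b\mapsto d$), checking once that it is a well-defined permutation in every case, so that reading coincident labels literally needs no separate analysis. Together with multiplicativity and the cyclic case, the Recurrence Rule then reduces the computation of $w_{\GL(N)}$ on an arbitrary permutation to strictly simpler ones, which is the asserted recursion.
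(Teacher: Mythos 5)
Your argument is correct and is essentially the approach the paper relies on: the theorem itself is only quoted from \cite{ZY2}, but the proof given in Section~6 for the $\GL(m|n)$ analogue is exactly your computation — realize the swap of the two adjacent factors as conjugation by $(l\ l+1)$, write the difference as a commutator, and let the two Kronecker deltas in $[E_{ij},E_{kl}]=\delta_{jk}E_{il}-\delta_{il}E_{kj}$ produce the two contracted diagrams, with the special case $\sigma(l+1)=l$ yielding the factors $C_1$ and $N$. Your identification of which delta gives which right-hand diagram matches the paper's pictures, so the proposal is sound as written.
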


The recursion rule of the theorem defines a weight system on permutations taking
values in the ring of polynomials in infinitely many variables ${\mathbb C}[C_0=N,C_1,C_2,\dots]$.
We denote this universal $\GL$-weight system by $w_\GL$, where $C_0$ coincides with the number $N$ 
in the second recursion rule and $C_k$ corresponds to the standard length $k$ cyclic permutation. The recursion in the theorem allows one
to compute this weight system effectively.

\section{Extension of the $\GL(m|n)$-weight system to permutations}\label{sec:gen}

We define   $w_{\GL(m|n)}$ on permutations
in the following way, which is similar to the definition for $w_{\GL(N)}$.

For a permutation $\sigma\in S_k$, set
\[
  w_{\GL(m|n)}(\sigma)=\sum_{i_1,\cdots,i_k=1}^{m+n} (-1)^{f_\sigma} E_{i_1i_{\sigma(1)}}E_{i_2i_{\sigma(2)}}\cdots E_{i_ki_{\sigma(k)}},
\]
where $f_\sigma$ is the sign function which is a polynomial in $\bar i_1,\bar i_2,\dots,\bar i_k$ in the field $\BZ_2$ defined below.

The sign function $f_\sigma$ is a polynomial that has linear and quadratic terms only. For example, for the standard cyclic permutation $(1 2 \dots k):1\to2\to\dots\to k\to 1$, we have $f_{(1 2 \dots k)}=\bar i_2+\dots+\bar i_k$.

We say that an index $a$, $1\le a\le k$, is {\it distinguished} with respect to~$\sigma$
if $\sigma^{-1}(a)<a$. The set of distinguished indices is denoted by $P_1(\sigma)\subset \{1,\dots,k\}$.
We say that a pair of indices $(a,b)$, $1\le a<b\le k$, is {\it distinguished} if the two pairs of distinct real numbers $(\sigma^{-1}(a)+\epsilon,a-\epsilon)$ and $(\sigma^{-1}(b)+\epsilon,b-\epsilon)$ alternate; here $\epsilon>0$ is a small real number, say, $\epsilon=\frac{1}{3}$. The set of distinguished pairs of indices is denoted by $P_2(\sigma)\subset\{1,\dots,k\}\times\{1,\dots,k\}$.

\begin{definition}
The sign function $f_\sigma$ of a permutation $\sigma\in S_k$ is defined by
 \[
  f_\sigma(\bar i_1,\bar i_2, \dots)=\sum_{a\in P_1(\sigma)}\bar i_a+\sum_{(a,b)\in P_2(\sigma)}\bar i_a\bar i_b.
 \]
\end{definition}

A more convenient treatment of the invariant $w_{\GL(m|n)}(\sigma)$ and the sign function uses the language of digraphs from the previous section.

The set of indices participating in the summation will be labelled by the edges (rather than by vertices). For each vertex $i$, we denote by $in(i)$ and $out(i)$ the incoming edge and outcoming edge incident to the vertex $i$, respectively. With this notation, we have
\[
  w_{\GL(m|n)}(\sigma)=\sum_{i_1,\cdots,i_k=1}^{m+n}  (-1)^{f_\sigma} E_{i_{in(1)}i_{out(1)}}\cdots E_{i_{in(k)}i_{out(k)}}.
\]
The original formula corresponds to the numbering of the edges such
that the edge $i\to j$ is numbered $j$. The result is obviously independent of the numbering.

With this notation, an edge is distinguished if it is directed from left to right. A pair of edges with pairwise distinct ends is distinguished  if the corresponding pairs of vertices alternate. If the edges have common vertices, we first bring them to a general position by shifting slightly the beginning of each edge to the right and the endpoint of each edge to the left, and then check whether the pairs of ends of the shifted edges do alternate.

\begin{claim}
  For the cyclic permutation $\sigma=(1 2 \cdots k)$, the diagram is $\begin{tikzpicture}[baseline={([yshift=-.5ex]current bounding box.center)},decoration={markings, mark= at position .7 with {\arrow{stealth}}}]
    \draw[->,thick] (-1,0)--(1,0);
    \draw[blue,postaction={decorate}] (-.9,0) ..  controls (-.75,.2) ..(-.6,0);
    \draw[blue,postaction={decorate}] (-.6,0) ..  controls (-.45,.2) ..(-.3,0);
    \draw[blue,postaction={decorate}] (.3,0) ..  controls (.45,.2) ..(.6,0);
    \draw[blue,postaction={decorate}] (.6,0) ..  controls (.75,.2) ..(.9,0);
    \draw[blue,postaction={decorate}] (.9,0) ..  controls (0,-.5) ..(-.9,0);
    \fill[black] (-.9,0) circle (1pt) node[below] {\tiny 1};
    \fill[black] (-.6,0) circle (1pt) node[below] {\tiny 2};
    \fill[black] (-.3,0) circle (1pt) node[below] {\tiny 3};
    \fill[black] ( .3,0) circle (1pt) node[below] {\tiny k-2};
    \fill[black] ( .6,0) circle (1pt)  node[below]{\tiny k-1};
    \fill[black] ( .9,0) circle (1pt) node[below] {\tiny k};
    \node[below] at (0,.3) {$\cdots$};
\end{tikzpicture}$;
and we have $f_{(1 2 \cdots k)}=\bar i_2+\dots+\bar i_k$.
\end{claim}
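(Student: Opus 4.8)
The plan is to read off both assertions directly from the definitions of $P_1(\sigma)$ and $P_2(\sigma)$ specialized to $\sigma=(1\,2\,\cdots\,k)$, for which $\sigma^{-1}(a)=a-1$ when $2\le a\le k$ and $\sigma^{-1}(1)=k$. For the digraph: in the edge-labelling convention where the edge $i\to j$ carries the label $j=\sigma(i)$, the edge labelled $a$ runs from $\sigma^{-1}(a)$ to $a$. For $2\le a\le k$ this is the edge $(a-1)\to a$ between two consecutive vertices, which we draw as a short arc above the axis pointing to the right; for $a=1$ it is the single long edge $k\to 1$, pointing to the left and drawn below the axis. This is exactly the picture displayed in the statement, so the first part of the claim is immediate.

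For the linear part of $f_\sigma$, recall that an index $a$ lies in $P_1(\sigma)$ iff $\sigma^{-1}(a)<a$, equivalently iff its edge is directed from left to right. For $2\le a\le k$ we have $\sigma^{-1}(a)=a-1<a$, so $a\in P_1(\sigma)$; for $a=1$ we have $\sigma^{-1}(1)=k>1$ (assuming $k\ge 2$), so $1\notin P_1(\sigma)$. Hence $P_1(\sigma)=\{2,\dots,k\}$ and $\sum_{a\in P_1(\sigma)}\bar i_a=\bar i_2+\cdots+\bar i_k$.

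For the quadratic part, I would show that $P_2(\sigma)=\emptyset$. With $\epsilon=\tfrac13$, the chord attached to index $a$ has endpoints $\sigma^{-1}(a)+\epsilon$ and $a-\epsilon$. For $2\le a<b\le k$ both endpoints of the $a$-th chord lie in the open interval $(a-1,a)$ and both endpoints of the $b$-th chord lie in $(b-1,b)$; these intervals are disjoint, so the two chords do not alternate. For a pair $(1,b)$ with $2\le b\le k$ the first chord has endpoints $1-\epsilon$ and $k+\epsilon$, and the elementary inequalities $1-\epsilon<b-1+\epsilon<b-\epsilon<k+\epsilon$ show that the $b$-th chord is strictly nested inside the first one, so again these two chords do not alternate. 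Thus no pair of indices is distinguished and the quadratic part of $f_\sigma$ vanishes. This is the only step with any content, and even here the single delicate point is the bookkeeping of the $\epsilon$-shift that puts edges sharing a vertex into general position; the displayed inequalities dispose of it.

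Combining the last two steps gives $f_{(1\,2\,\cdots\,k)}=\bar i_2+\cdots+\bar i_k$, as claimed; substituting this back into the defining sum for $w_{\GL(m|n)}$ then recovers exactly the Casimir element $C_k$ of the Nwachuku theorem. Finally, one can dispose of the degenerate cases $k=0$ (empty permutation) and $k=1$ (one fixed point, where $\sigma^{-1}(1)=1\not<1$) at the outset, since there $P_1(\sigma)=P_2(\sigma)=\emptyset$ and $f_\sigma=0$, consistent with the (empty) sum $\bar i_2+\cdots+\bar i_k$.
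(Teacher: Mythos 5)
Your verification is correct and complete: you compute $P_1(\sigma)=\{2,\dots,k\}$ and $P_2(\sigma)=\emptyset$ directly from the definitions (including the careful $\epsilon$-shift argument showing consecutive short edges are disjoint and the long edge $k\to1$ nests all others), which is exactly the routine check the paper leaves implicit by stating this as a Claim without proof. Nothing to add.
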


Assume that two permutations $\sigma$ and $\sigma'$ are conjugate by a transposition of two neighboring elements. Then these two elements are the endpoints of the four edges $a,b,c,d$ as shown in the picture below (among the edges $a,b,c,d$ there could be pairs of coincident ones).

\[
  \begin{tikzpicture}[baseline={([yshift=-.5ex]current bounding box.center)},decoration={markings, mark= at position .55 with {\arrow{stealth}}}]
    \draw[->,thick] (-1,0) --  (1,0);
    \fill[black] (-.3,0) circle (1pt) ;
    \fill[black] (.3,0) circle (1pt) ;
    \draw (-.5,.8) node[left] {a};
    \draw (-.5,-.8) node[left] {b};
    \draw (.5,.8) node[right] {c};
    \draw (.5,-.8) node[right] {d};
    \draw[blue,postaction={decorate}] (-.5,.8) -- (.3,0);
    \draw[blue,postaction={decorate}] (-.3,0) -- (.5,.8);
    \draw[blue,postaction={decorate}] (-.5,-.8) -- (-.3,0);
    \draw[blue,postaction={decorate}] (.3,0) -- (.5,-.8);
    \draw (0,-1.2) node { $\sigma$};
   \end{tikzpicture} \ \ \ \ \ \ \ \ \ \ \ \
  \begin{tikzpicture}[baseline={([yshift=-.5ex]current bounding box.center)},decoration={markings, mark= at position .55 with {\arrow{stealth}}}]
    \draw[->,thick] (-1,0) --  (1,0);
    \fill[black] (.3,0) circle (1pt) ;
    \fill[black] (-.3,0) circle (1pt) ;
    \draw (-.5,.8) node[left] {a};
    \draw (-.5,-.8) node[left] {b};
    \draw (.5,.8) node[right] {c};
    \draw (.5,-.8) node[right] {d};
    \draw[blue,postaction={decorate}] (-.5,.8) -- (-.3,0);
    \draw[blue,postaction={decorate}] (.3,0) -- (.5,.8);
    \draw[blue,postaction={decorate}] (-.5,-.8) -- (.3,0);
    \draw[blue,postaction={decorate}] (-.3,0) -- (.5,-.8);
    \draw (0,-1.2) node { $\sigma'$};
  \end{tikzpicture}
\]
\begin{lemma}\label{le:1}
  The sign functions $f_\sigma$ and $f_{\sigma'}$ are related by
  \[
  f_{\sigma'}=f_\sigma +(\bar i_a+\bar i_d)(\bar i_b+ \bar i_c).
  \]
  In other words, each of the four pairs of edges $(a,c),(a,d),(b,c),(b,d)$ changes the property of being distinguished when one passes from the permutation $\sigma$ to $\sigma'$.
\end{lemma}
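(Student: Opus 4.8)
The plan is to compare the two sign functions $f_\sigma$ and $f_{\sigma'}$ term by term, using the combinatorial description of $P_1$ and $P_2$ in terms of the digraph. Since passing from $\sigma$ to $\sigma'$ is a transposition of two neighboring vertices, only the edges incident to these two vertices can change their distinguished status, and these are precisely the four edges $a,b,c,d$ (an edge from the upper-left to the lower-right vertex, etc., as labeled in the picture). So I would first argue that $P_1(\sigma)=P_1(\sigma')$: the linear part of $f$ counts edges directed left-to-right, and swapping two neighboring vertices does not change the left-to-right direction of any edge (the picture shows $a$ going into the second vertex, $b$ coming out of the first, $c$ coming out of the second, $d$ coming out of the first -- in both pictures $a$ and $b$ are the edges touching the left vertex, $c,d$ the right vertex, and each edge keeps pointing the same way). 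Hence only the quadratic part $\sum_{(x,y)\in P_2}\bar i_x\bar i_y$ can change.

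Next I would identify exactly which pairs of edges can change their alternation status. A pair of edges not both incident to the two swapped vertices is unaffected, since the cyclic/linear order of their four endpoints on the line is unchanged. A pair of edges both of which are among $\{a,b,c,d\}$ but which share the feature of being, say, both incident to the same one of the two vertices -- here one has to recall the general-position convention (shift each edge's tail slightly right and head slightly left) and check case by case that such a pair does not change. The pairs that genuinely can change are exactly $(a,c),(a,d),(b,c),(b,d)$: for each of these, the two edges have one endpoint at the left swapped vertex and one at the right swapped vertex (after general-position shifting, distinct), so interchanging the two vertices toggles whether the two endpoint-pairs alternate. I would verify this toggling by a direct picture computation: in $\sigma$ the relevant four shifted endpoints appear in one cyclic order around the Wilson loop, in $\sigma'$ in the transposed order, and alternation flips exactly once.

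It then follows that
\[
  f_{\sigma'}-f_\sigma=\bar i_a\bar i_c+\bar i_a\bar i_d+\bar i_b\bar i_c+\bar i_b\bar i_d=(\bar i_a+\bar i_d)(\bar i_b+\bar i_c)
\]
in $\BZ_2$, which is the claimed formula; here I'm using that there are no linear corrections (as argued above) and that the cross term $\bar i_a\bar i_b$ and $\bar i_c\bar i_d$ do not appear because the pairs $(a,b)$ and $(c,d)$ — edges meeting at a common swapped vertex — do not change status. One subtlety to handle carefully: among $a,b,c,d$ there may be coincidences (e.g. the edge from the left vertex loops back to the right vertex, so $b=c$ as edges, or a vertex has a self-loop). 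I would note that the formula $(\bar i_a+\bar i_d)(\bar i_b+\bar i_c)$ still holds in all such degenerate cases, either because a coincidence $\bar i_a=\bar i_b$ makes a term vanish mod $2$, or because the general-position shifting resolves the ambiguity consistently; this degenerate bookkeeping is the only genuinely fiddly part, and I'd verify it by enumerating the handful of coincidence patterns.

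The main obstacle is the case analysis for the general-position convention: one must be meticulous that shifting tails right and heads left is applied consistently in both $\sigma$ and $\sigma'$ so that a pair of edges sharing a swapped vertex is unambiguously classified and provably unchanged, while a pair straddling the two vertices provably toggles. Once the bookkeeping conventions are pinned down, the rest is a finite check.
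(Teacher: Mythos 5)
Your overall strategy (compare $P_1$ and $P_2$ directly, argue the linear part is generically unchanged, and determine which quadratic terms toggle) is the right one, and the paper states this lemma without proof, so a careful execution would be a genuine addition. However, your execution of the key combinatorial step is wrong. In the paper's picture the edges incident to the \emph{same} swapped vertex are $a$ and $d$ (both meet $l+1$ in $\sigma$: $a$ comes in, $d$ goes out) and, respectively, $b$ and $c$ (both meet $l$) --- not $a,b$ versus $c,d$ as you assert. Consequently the pairs $(a,d)$ and $(b,c)$ are the ones that do \emph{not} toggle: under the swap their two $\epsilon$-shifted near endpoints translate together from the slot around one vertex to the slot around the other without changing their relative order, so alternation is preserved. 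The pairs that \emph{do} toggle are exactly those with one edge at $l$ and one at $l+1$, namely $(a,b),(a,c),(d,b),(d,c)$: for these the two near endpoints exchange their relative order. This gives
\[
 f_{\sigma'}-f_\sigma=\bar i_a\bar i_b+\bar i_a\bar i_c+\bar i_d\bar i_b+\bar i_d\bar i_c=(\bar i_a+\bar i_d)(\bar i_b+\bar i_c),
\]
which is the displayed formula. (The lemma's own ``in other words'' sentence, listing $(a,c),(a,d),(b,c),(b,d)$, is inconsistent with its displayed formula; checking e.g.\ $\sigma=(13)(24)$ against $\sigma'=(12)(34)$ in the paper's table confirms that the toggling pairs are $\{a,b\},\{a,c\},\{d,b\},\{d,c\}$ and the displayed formula is the correct one.) Your list of toggling pairs follows the erroneous gloss, and your concluding identity $\bar i_a\bar i_c+\bar i_a\bar i_d+\bar i_b\bar i_c+\bar i_b\bar i_d=(\bar i_a+\bar i_d)(\bar i_b+\bar i_c)$ is false over $\BZ_2$: the left-hand side factors as $(\bar i_a+\bar i_b)(\bar i_c+\bar i_d)$. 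So as written the proof establishes a different (and incorrect) polynomial and reaches the stated one only by an algebra slip. A further gap: your claim $P_1(\sigma)=P_1(\sigma')$ fails in the degenerate case $\sigma(l+1)=l$, where $b=d$ is the edge $l+1\to l$ and reverses direction under the swap; the formula survives only because $\bar i_b^2=\bar i_b$ produces the compensating linear term, and this is precisely one of the coincidence patterns you defer but must actually verify.
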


Since the sign function $f_\sigma$ matches the sign in the Casimir elements and this lemma says the sign function $f_\sigma$ matches the involution operation $S$, we have
\begin{claim}
   The $\GL(m|n)$-weight system for chord diagrams in~\cite{Va,FKV} is a special case of the $\GL(m|n)$-weight system for permutations,
    where we treat a chord diagram with $k$ chords as an involution without fixed points on the set of $2k$ elements.
\end{claim}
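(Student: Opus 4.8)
The plan is to unwind both weight systems in the standard matrix-unit basis $\{E_{ij}\}$ of $\GL(m|n)$ and compare them term by term. Since $\langle E_{ij},E_{kl}\rangle=\str(E_{ij}E_{kl})=(-1)^{\bar i}\delta_{jk}\delta_{il}$, the dual basis vector of $E_{ij}$ is $(-1)^{\bar j}E_{ji}$, so the tensor in $\fg\otimes\fg$ representing the invariant form is $\sum_{i,j}(-1)^{\bar j}E_{ij}\otimes E_{ji}$; its image in $U(\GL(m|n))$ is $\sum_{i,j}(-1)^{\bar j}E_{ij}E_{ji}=C_2$, already matching the $k=2$ instance of the cyclic-permutation claim. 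For a chord diagram $D$ with $n$ chords, regarded as a fixed-point-free involution $\tau=\tau_D$ on $\{1,\dots,2n\}$, I would feed the $n$-th tensor power of this element into the map $\fg^{\otimes 2n}\xrightarrow{S_{2n}\circ\sigma_D}\fg^{\otimes 2n}\to U(\GL(m|n))$ of Section~\ref{sec:cho}. Tracking the indices, the copy $E_{ab}\otimes E_{ba}$ placed on the $s$-th chord has its first slot carried by $\sigma_D$ to the left endpoint $p_s$ of that chord and its second slot to the right endpoint $q_s$; renaming $i_{p_s}=a$ and $i_{q_s}=b$, the monomial produced is exactly $E_{i_1i_{\tau(1)}}\cdots E_{i_{2n}i_{\tau(2n)}}$, i.e.\ the one occurring in $w_{\GL(m|n)}(\tau)$. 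This matching of monomials is purely combinatorial and is the same computation as in the $\GL(N)$ case~\cite{ZY2}, so the whole statement reduces to comparing signs.

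The sign comparison asks that the product of the $n$ ``metric signs'' $(-1)^{\bar i_{q_s}}$ times the Koszul sign of the graded permutation $S_{2n}\circ\sigma_D$ be equal to $(-1)^{f_\tau}$, as a polynomial identity in $\BZ_2[\bar i_1,\dots,\bar i_{2n}]$. The linear parts match at once: the metric signs contribute $\sum_s\bar i_{q_s}$, while the linear part of $f_\tau$ is $\sum_{a\in P_1(\tau)}\bar i_a$ and $P_1(\tau)=\{a:\tau(a)<a\}$ is precisely the set $\{q_1,\dots,q_n\}$ of right endpoints. For the quadratic part I must identify the Koszul exponent with $\sum_{(a,b)\in P_2(\tau)}\bar i_a\bar i_b$. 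The factor in position $j$ after $\sigma_D$ has degree $\bar i_j+\bar i_{\tau(j)}$, which is the common degree $d_t:=\bar i_{p_t}+\bar i_{q_t}$ of both factors coming from the chord $t$ through $j$; since $\sigma_D$ keeps the two slots of each chord in their original relative order, same-chord pairs contribute nothing, and the Koszul exponent is $\sum_{\{t,t'\}}N(t,t')\,d_td_{t'}$, where $N(t,t')\in\BZ_2$ counts (mod $2$) the reversals $\sigma_D$ produces among the four slot-pairs of the chords $t$ and $t'$. A short case check — any two chords are disjoint, nested, or crossing — gives $N(t,t')=1$ exactly when $t$ and $t'$ cross; and on the other side the $P_2$-contribution of the pair $\{t,t'\}$ is $\sum_{a\in\{p_t,q_t\},\,b\in\{p_{t'},q_{t'}\}}\bar i_a\bar i_b=d_td_{t'}$, again exactly when $t$ and $t'$ cross. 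Hence the two quadratic polynomials agree, which completes the proof.

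An equivalent route to the quadratic-part identity, and probably the intended one, replaces the case check by an induction on the number of adjacent-transposition moves needed to reduce $D$ to the standard diagram $D_0$ of $n$ consecutive isolated chords. The base case is immediate: $\sigma_{D_0}=\mathrm{id}$ contributes no Koszul sign, $P_2(\tau_{D_0})=\emptyset$, and the metric signs reproduce $f_{\tau_{D_0}}=\bar i_2+\bar i_4+\dots+\bar i_{2n}$. In the inductive step, a move at a pair of neighboring positions conjugates $\tau_D$ by the corresponding transposition and post-composes $\sigma_D$ with it, so it multiplies the Koszul sign by $(-1)^{(\bar i_b+\bar i_c)(\bar i_a+\bar i_d)}$, where $\bar i_b+\bar i_c$ and $\bar i_a+\bar i_d$ are the degrees of the two matrix units sitting at the swapped vertices in the notation of Lemma~\ref{le:1}; this is exactly the factor by which $(-1)^{f_\sigma}$ changes according to that lemma, so both sides of the claimed sign identity are multiplied by the same factor at each step.

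I expect the main difficulty to be bookkeeping rather than anything conceptual: pinning down which slot of the Casimir tensor attaches to which endpoint of a chord, fixing once and for all the Koszul-sign convention built into the representation $S_{2n}$, and — in the inductive variant — keeping track of how the summation indices $i_1,\dots,i_{2n}$ are relabeled when two neighboring endpoints are transposed. Once these conventions are fixed consistently, the shape of the argument is forced: the linear part of the sign is dictated by $P_1(\tau)$ being the set of right endpoints, and the quadratic part by the disjoint/nested/crossing trichotomy or, equivalently, by Lemma~\ref{le:1}, which is tailored precisely so that $f_\sigma$ transforms under neighboring transpositions the way the graded flip $S$ does.
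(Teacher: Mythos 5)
Your proposal is correct and, in its inductive variant, is exactly the paper's argument: the paper justifies the claim in one sentence by noting that $f_\sigma$ reproduces the signs in the Casimir elements (your base case for the standard diagram of isolated chords, together with the identification of $P_1(\tau)$ with the set of right endpoints) and that Lemma~\ref{le:1} makes $f_\sigma$ transform under adjacent transpositions by the same factor $(-1)^{(\bar i_a+\bar i_d)(\bar i_b+\bar i_c)}$ as the graded flip $S$. Your additional direct verification of the quadratic part via the disjoint/nested/crossing trichotomy is a sound self-contained alternative that the paper does not spell out.
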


\begin{example}
  Let $\sigma=(132)\in S_3$. According to the definition,
  \begin{align*}
    w_{\GL(m|n)}((132))&=\sum_{i_1,i_2,i_3=1}^{m+n} (-1)^{f_{(132)}} E_{i_1i_3}E_{i_2i_1} E_{i_3i_2},
      \intertext{where } f_{(132)}&=\bar i_3+\bar i_1\bar i_3+\bar i_3\bar i_2+\bar i_1\bar i_2,
      \intertext{and we have the Lie superbracket $[E_{ij},E_{kl}]=\delta_{jk}E_{il}-(-1)^{(\bar i +\bar j)(\bar k +\bar l)}\delta_{il}E_{kj}$.}
      \intertext{Now,}C_3-w_{\GL(m|n)}((132))&=\sum_{i_1,i_2,i_3=1}^{m+n} (-1)^{\bar i_3+\bar i_2} E_{i_1i_3}\left(E_{i_3i_2} E_{i_2i_1}-(-1)^{(\bar i_3+\bar i_2)(\bar i_1+\bar i_2)}  E_{i_3i_2} E_{i_2i_1} \right)\\
                                              &=\sum_{i_1,i_2,i_3=1}^{m+n} (-1)^{\bar i_3+\bar i_2} E_{i_1i_3}\left[E_{i_3i_2}, E_{i_2i_1}\right]\\
                                              &=\sum_{i_1,i_2,i_3=1}^{m+n} (-1)^{\bar i_3+\bar i_2} E_{i_1i_3}(\delta_{i_2i_2}E_{i_3i_1}-\delta_{i_3i_1} (-1)^{(i_3+\bar i_2)(i_1+\bar i_2)} E_{i_2i_2})\\
                                              &=(m-n)\sum_{i_1,i_3=1}^{m+n} (-1)^{\bar i_3} E_{i_1i_3}E_{i_3i_1}-\sum_{i_1,i_2=1}^{m+n}  E_{i_1i_1} E_{i_2i_2}\\
                                              &=(m-n)C_2-C_1^2
  \end{align*}
  Finally, we get $w_{\GL(m|n)}((132))=C_3-(m-n)C_2+C_1^2$.
\end{example}

\newpage
\begin{result} \ \\
  \begin{tabular}{c|c|c|c}
  k&$\sigma$&$f_\sigma$&polynomial in Casimir elements\\ \hline
  \multirow{2}*{2}  & Id&$0$ & $C_1^2$\\
    &(1 2)& $\bar{i}_2$& $C_2$\\\hline
    \multirow{6}*{3}&Id& 0 &$C_1^3$ \\
    &(1 2)& $\bar{i}_2$& $C_1C_2$\\
    &(2 3)& $\bar{i}_3$& $C_1C_2$\\
    &(1 3)& $\bar{i}_3$& $C_1C_2$\\
    &(1 2 3)& $\bar{i}_2+\bar{i}_3$& $C_3$\\
    &(1 3 2)& $\bar i_3+\bar i_2+(\bar i_3+\bar i_2)(\bar i_1+\bar i_2)$& $C_3-(m-n)C_2+C_1^2$\\\hline
    \multirow{25}*{4}&Id& 0 &$C_1^4$ \\
    &(1 2)& $\bar{i}_2$& $C_1^2C_2$\\
    &(2 3)& $\bar{i}_3$& $C_1^2C_2$\\
    &(1 3)& $\bar{i}_3$& $C_1^2C_2$\\
    &(1 4)& $\bar{i}_4$& $C_1^2C_2$\\
    &(2 4)& $\bar{i}_4$& $C_1^2C_2$\\
    &(3 4)& $\bar{i}_4$& $C_1^2C_2$\\
    &(1 2)(3 4)&$\bar i_2+\bar{i}_4$& $C_2^2$\\
    &(1 4)(2 3)&$\bar i_3+\bar{i}_4$& $C_2^2$\\
    &(1 3)(2 4)&$\bar i_3+\bar{i}_4+(\bar i_2+\bar{i}_4)(\bar i_1+\bar{i}_3)$& $C_2^2-(m-n)C_2+C_1^2$\\
    &(1 2 3)& $\bar{i}_2+\bar{i}_3$& $C_1C_3$\\
    &(1 2 4)& $\bar{i}_2+\bar{i}_4$& $C_1C_3$\\
    &(1 3 4)& $\bar{i}_3+\bar{i}_4$& $C_1C_3$\\
    &(2 3 4)& $\bar{i}_3+\bar{i}_4$& $C_1C_3$\\
    &(1 3 2)& $(\bar i_2+\bar i_3)(\bar i_1+\bar i_3)$& $C_1(C_3-(m-n)C_2+C_1^2)$\\
    &(1 4 2)& $(\bar i_2+\bar i_4)(\bar i_1+\bar i_4)$& $C_1(C_3-(m-n)C_2+C_1^2)$\\
    &(1 4 3)& $(\bar i_3+\bar i_4)(\bar i_1+\bar i_4)$& $C_1(C_3-(m-n)C_2+C_1^2)$\\
    &(2 4 3)& $(\bar i_2+\bar i_4)(\bar i_3+\bar i_4)$& $C_1(C_3-(m-n)C_2+C_1^2)$\\
    &(1 2 3 4)& $\bar i_2+\bar i_3+\bar i_4$& $C_4$\\
    &(1 2 4 3)& $\bar i_2+\bar i_4+\bar i_1\bar i_4+\bar i_1\bar i_3+\bar i_4\bar i_3$& $C_4-(m-n)C_3+C_1C_2$\\
    &(1 3 2 4)& & $C_4-(m-n)C_3+C_1C_2$\\
    &(1 3 4 2)& & $C_4-(m-n)C_3+C_1C_2$\\
    &(1 4 2 3)& & $C_4-(m-n)C_3+C_1C_2$\\
    &\multirow{2}*{(1 4 3 2)}&\multirow{2}*{$\bar i_4+(\bar i_1+\bar i_3)(\bar i_4+\bar i_2)$} & $C_4-2(m-n)C_3+2C_1C_2+$\\
    &&&$+(m-n)^2C_2-(m-n)C_1$\\
  \end{tabular}
\end{result}

In all the above examples,
the value of the $\GL(m|n)$-weight system is a polynomial in the difference $m-n$.
The following stronger theorem, which is another main result of the present paper, asserts that this is always true.

\begin{theorem}\label{the:main}
  The weight system $w_{\GL(m|n)}$ for permutations is the result of substituting $m-n$ for~$C_0$, and
  the $k$~th Casimir element in $\GL(m|n)$ for $C_k$, $k>0$, in the weight system $w_{\GL}$.
\end{theorem}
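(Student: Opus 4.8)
The plan is to show that $w_{\GL(m|n)}$ satisfies the same four defining properties as the universal $\GL$-weight system listed in the Theorem of Section~\ref{sec:rev}, with the number $N$ replaced by the supertrace value $m-n$ and the cyclic permutation value replaced by the Casimir $C_k \in U(\GL(m|n))$. Since the Recurrence Rule together with the base cases determines $w_{\GL}$ uniquely as a polynomial in the $C_i$, establishing these properties for $w_{\GL(m|n)}$ forces the stated specialization. The first two properties are immediate from the defining formula: for the empty permutation the sum is empty and equals $1$, and for a concatenation $\sigma\sqcup\tau$ the index sets are disjoint and the sign function splits as $f_{\sigma\sqcup\tau}=f_\sigma+f_\tau$ (there are no distinguished pairs straddling the two blocks), so the sum factors. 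The third property, that a length-$k$ cyclic permutation gives $C_k$, is exactly the Claim computing $f_{(1\,2\,\cdots\,k)}=\bar i_2+\dots+\bar i_k$, which matches the sign in the definition of the Casimir $C_k$ from Theorem on Casimir elements.

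The heart of the argument is the Recurrence Rule. I would proceed exactly as in~\cite{ZY2}: fix a permutation $\sigma\in S_k$ and two neighboring elements $l,l+1$, write $\sigma'$ for the conjugate by the transposition $(l,l+1)$, and compute $w_{\GL(m|n)}(\sigma)-w_{\GL(m|n)}(\sigma')$ directly from the defining sum. By Lemma~\ref{le:1}, the sign functions differ by $(\bar i_a+\bar i_d)(\bar i_b+\bar i_c)$, which is precisely the sign picked up when commuting the two adjacent matrix units $E_{i_{in(l)}i_{out(l)}}$ and $E_{i_{in(l+1)}i_{out(l+1)}}$ past each other in $U(\GL(m|n))$ using the supercommutator $[E_{ij},E_{kl}]=\delta_{jk}E_{il}-(-1)^{(\bar i+\bar j)(\bar k+\bar l)}\delta_{il}E_{kj}$. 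Hence $w_{\GL(m|n)}(\sigma)-w_{\GL(m|n)}(\sigma')$ is a signed sum over the supercommutator, and the two Kronecker-delta terms collapse one of the two summation indices. One delta term merges the two vertices $l,l+1$ into a single vertex carrying the edges $a,d$ on one side and a closed-up $b,c$ on the other (the first term on the right of the Recurrence Rule); the other delta term does the symmetric thing (the second term). One must check that, after the contraction, the residual sign function on the smaller permutation is exactly $f$ of the contracted diagram — this is where the quadratic structure of $f_\sigma$ and the "shift to general position" convention for edges sharing a vertex are used, and it is the step I expect to be the main obstacle: bookkeeping which distinguished pairs survive, disappear, or are created under edge contraction, and verifying the residual signs $(-1)^{\bar i_{\text{(merged index)}}}$ cancel correctly against the $(-1)^{\bar j}$ appearing in the second delta term of the supercommutator. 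The special case $\sigma(l+1)=l$ is handled the same way: now $a,d$ close up into a free loop contributing the scalar $\sum_{i}(-1)^{\bar i}=m-n$ (the supertrace of the identity), and $b,c$ close up contributing a factor $C_1=\sum_i E_{ii}$, reproducing the displayed second recurrence with $N$ replaced by $m-n$.

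Once the Recurrence Rule is verified, I would invoke the uniqueness already implicit in Section~\ref{sec:rev}: the recursion strictly reduces the number of vertices (or the number of "descents"), so iterating it expresses $w_{\GL(m|n)}(\sigma)$ as a polynomial in $m-n$ and the Casimirs $C_1,C_2,\dots$, and this polynomial is forced to be the same one obtained by running the identical recursion for $w_{\GL}$ with $C_0=m-n$. I should also remark that the values so obtained genuinely lie in $ZU(\GL(m|n))$ — this follows either from the Claim in Section~\ref{sec:cho} via the chord-diagram interpretation together with the fact that the permutation invariant for general $\sigma$ reduces (by the recursion) to polynomials in Casimirs, or can be shown directly as in~\cite{ZY2} by checking invariance under conjugation by the long cycle. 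Finally, a consistency check against the Result table (e.g.\ $w_{\GL(m|n)}((132))=C_3-(m-n)C_2+C_1^2$, matching $w_{\GL}$ with $C_0=m-n$) confirms the normalization.
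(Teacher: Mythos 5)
Your proposal follows essentially the same route as the paper: both reduce the theorem to showing that $w_{\GL(m|n)}$ obeys the $\GL$ Recurrence Rule, by expanding the supercommutator of the two adjacent matrix units, matching the commutation sign with Lemma~\ref{le:1}, and handling the special case $\sigma(l+1)=l$ via $\sum_j(-1)^{\bar j}=m-n$ and $\sum_j E_{jj}=C_1$. The one step you flag as the ``main obstacle'' --- that the residual sign after contracting the two vertices is exactly the sign function of the contracted permutation --- is precisely the paper's Lemma~\ref{le:2}, which it proves by a short case analysis: over $\BZ_2$ the cross terms $\bar i_x\bar i_a+\bar i_x\bar i_c$ vanish once $\bar i_a=\bar i_c$, and the linear and quadratic discrepancies ($\bar i_a$ and $\bar i_a\bar i_c=\bar i_a$) cancel each other, so your outline closes exactly as you anticipated.
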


The proof of this theorem is given in the next section.

\begin{example}
In~\cite{FKV}, a recurrence relation for computing the values of the $\GL(1|1)$-weight
system is given. Our approach suggests another recurrence for this weight system
extended to permutations. Setting $C_0=1-1=0$ and using Theorem~\ref{th:Casimirs}
we can express higher Casimirs in $ZU(\GL(1|1))$ in terms of $C_1,C_2$. Namely,
we have 
$$
1-\sum_{k=0}^\infty\varphi(C_k)z^{k+1}=\frac{1-\frac{z}{1-z x_1}}{1-\frac{z}{1+z x_2}},
$$
which gives
\begin{align*}
\varphi(C_1)&=x_1+x_2,\\ \varphi(C_2)&=(x_1+x_2)(x_1-x_2+1), \\
\varphi(C_3)&=(x_1+x_2)(x_1^2-x_1x_2+x_2^2+x_1-2x_2+1)\\
            &=\varphi(C_1)(\frac{3\varphi(C_2)^2}{4\varphi(C_1)^2}+\frac{\varphi(C_1)^2}{4}-\frac{\varphi(C_1)}{2}+\frac{1}{4}), \\
            \dots
\end{align*}
so that we have
$$
C_3=\frac{3C_2^2}{4C_1}+\frac{C_1^3}{4}-\frac{C_1^2}{2}+\frac{C_1}{4},\dots
$$
and, more generally
\begin{align*}
&x=\frac{C_1^2-C_1+C_2}{2 C_1},\qquad y=\frac{C_1^2+C_1-C_2}{2 C_1},\\
&\sum _{k=0}^{\infty } C_k z^k=\frac{1-\frac{(1-(x+1) z) (y z+1)}{(1-x z) (1-(1-y) z)}}{z}=\frac{C_1 z}{\left(1-\frac{\left(-C_1^2+C_1+C_2\right) z}{2 C_1}\right) \left(1-\frac{\left(C_1^2-C_1+C_2\right) z}{2 C_1}\right)}
\end{align*}

For example, if we make this substitution in the explicit formulas for the values
of the $w_\GL$-weight system on chord diagrams whose intersection graph is a complete
graph given in~\cite{ZY2}, we obtain the following values of the $\GL(1|1)$-weight system
on these diagrams:

These results are worth to be compared with the values of the skew characteristic polynomial
of complete graphs from~\cite{DL}. 

\end{example}

\section{Proof of theorem \ref{the:main}}\label{sec:pro}
We prove the theorem by directly proving that $w_{\GL(m|n)}$ satisfies the same Recurrence Rule as $w_{\GL}$ with $C_0=m-n$.

Assuming the permutation $\sigma$ is as shown before, and suppose $\tilde\sigma$ merges the two nodes and connects the edges $a$ and $c$,
\[
  \begin{tikzpicture}[baseline={([yshift=-.5ex]current bounding box.center)},decoration={markings, mark= at position .55 with {\arrow{stealth}}}]
    \draw[->,thick] (-1,0) --  (1,0);
    \fill[black] (-.3,0) circle (1pt) ;
    \fill[black] (.3,0) circle (1pt) ;
    \draw (-.5,.8) node[left] {a};
    \draw (-.5,-.8) node[left] {b};
    \draw (.5,.8) node[right] {c};
    \draw (.5,-.8) node[right] {d};
    \draw[blue,postaction={decorate}] (-.5,.8) -- (.3,0);
    \draw[blue,postaction={decorate}] (-.3,0) -- (.5,.8);
    \draw[blue,postaction={decorate}] (-.5,-.8) -- (-.3,0);
    \draw[blue,postaction={decorate}] (.3,0) -- (.5,-.8);
    \draw (0,-1.2) node { $\sigma$};
  \end{tikzpicture}\ \ \ \ \ \ \ \ \ \ \ \ \
  \begin{tikzpicture}[baseline={([yshift=-.5ex]current bounding box.center)},decoration={markings, mark= at position .55 with {\arrow{stealth}}}]
    \draw[->,thick] (-1,0)  -- (1,0);
    \fill[black] (0,0) circle (1pt) ;
    \draw (0,.8) node[above] {a / c};
    \draw (-.5,-.8) node[left] {b};
    \draw (.5,-.8) node[right] {d};
    \draw[blue,postaction={decorate}] (-.5,.8) ..controls (0,.4) .. (.5,.8);
    \draw[blue,postaction={decorate}] (-.5,-.8) -- (0,0);
    \draw[blue,postaction={decorate}] (0,0) -- (.5,-.8);
    \draw (0,-1.2) node { $\tilde\sigma$};
  \end{tikzpicture}
\]
\begin{lemma}\label{le:2}

  We have $f_{\tilde\sigma}=f_{\sigma|\bar i_a=\bar i_c}$.
\end{lemma}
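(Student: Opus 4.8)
The plan is to compare the two sign functions monomial by monomial after identifying the edges of $\tilde\sigma$ with those of $\sigma$. Since $l$ and $l+1$ are neighbours, merging them (and decreasing by $1$ the labels of the vertices to the right of $l+1$) does not move any other vertex end past another, so every edge of $\sigma$ other than $a$ and $c$ is literally an edge of $\tilde\sigma$, while $a$ and $c$ are replaced by the single edge $a/c$ going from the tail of $a$ to the head of $c$. Thus $\bar i_{a/c}$ plays the role of both $\bar i_a$ and $\bar i_c$, and proving $f_{\tilde\sigma}=f_\sigma|_{\bar i_a=\bar i_c}$ amounts to matching the coefficients of the monomials $\bar i_e$, $\bar i_e\bar i_{e'}$ (for $e,e'\neq a,c$), $\bar i_e\bar i_{a/c}$ and $\bar i_{a/c}$ on the two sides. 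Here one uses that all variables take values in $\BZ_2$, so that the monomial $\bar i_a\bar i_c$ of $f_\sigma$ becomes $\bar i_{a/c}^2=\bar i_{a/c}$ under the substitution, i.e.\ it contributes to the \emph{linear} part of $f_\sigma|_{\bar i_a=\bar i_c}$.

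I would then record three combinatorial facts about the $\epsilon$-perturbed pictures of Section~\ref{sec:gen}. (i) For an edge $e\neq a,c$, the edge $e$ is distinguished for $\tilde\sigma$ iff it is for $\sigma$, and a pair $(e,e')$ with $e,e'\neq a,c$ is distinguished for $\tilde\sigma$ iff it is for $\sigma$; indeed, passing from $\sigma$ to $\tilde\sigma$ only deletes the head of $a$ (at $l+1-\epsilon$) and the tail of $c$ (at $l+\epsilon$) and slides the head of $b$ and the tail of $d$ onto the merged vertex, none of which alters the cyclic order of four ends belonging to edges other than $a,c$. (ii) The edge $a/c$ is distinguished for $\tilde\sigma$ iff $[a\ \text{distinguished}]+[c\ \text{distinguished}]+[(a,c)\ \text{distinguished}]=1$ in $\sigma$. (iii) For any edge $e\neq a,c$, the pair $(e,a/c)$ is distinguished for $\tilde\sigma$ iff $[(e,a)\ \text{distinguished}]+[(e,c)\ \text{distinguished}]=1$ in $\sigma$. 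Granting these, the bookkeeping closes: by (i) the $\bar i_e$ and $\bar i_e\bar i_{e'}$ coefficients agree; by (iii) so do the $\bar i_e\bar i_{a/c}$ coefficients (the right side receives $[(e,a)\ \text{dist}]+[(e,c)\ \text{dist}]$ from $\bar i_e\bar i_a$ and $\bar i_e\bar i_c$); and the $\bar i_{a/c}$ coefficient on the right is $[a\ \text{dist}]+[c\ \text{dist}]$ from the linear part plus $[(a,c)\ \text{dist}]$ from the demoted $\bar i_a\bar i_c$, which by (ii) equals $[a/c\ \text{dist}]$.

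The substantive statements are (ii) and (iii). For (iii) I would argue homologically inside the disc bounded by the Wilson loop: the chord $a/c$, the chords $a$ and $c$, and the tiny arc joining the (deleted) head of $a$ to the (deleted) tail of $c$ together bound a quadrilateral region, hence form a null-homologous loop; since no end of $e$ lies in that tiny arc, the mod-$2$ intersection number of $e$ with this loop is $0$, and as the tiny arc crosses nothing this reads $[(e,a/c)\ \text{dist}]+[(e,a)\ \text{dist}]+[(e,c)\ \text{dist}]=0$. Statement (ii) is the one-dimensional shadow of the same fact: writing $s$ for the tail of $a$ and $t$ for the head of $c$, and using that the head of $a$ ($=l+1-\epsilon$) and the tail of $c$ ($=l+\epsilon$) are adjacent with nothing between them, one verifies $[s<t]=[a\ \text{dist}]+[c\ \text{dist}]+[(a,c)\ \text{dist}]$ by a short case check on the position of $s$ and of $t$ relative to $l$ (six cases up to the obvious symmetry, plus the degenerate case $s=t$, in which $a/c$ is a fixed point of $\tilde\sigma$ and both sides vanish).

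The main obstacle I expect is not the generic argument above but making (ii) and (iii) airtight when some of the edges $a,b,c,d$ coincide — for instance $a=d$ (a fixed point at $l+1$ in $\sigma$) or $b=c$ — where the perturbation convention must be applied to an edge that is simultaneously incoming and outgoing at the relevant vertex and the quadrilateral picture degenerates. These are finitely many configurations and yield to the same reasoning, but they are the fussy part of the verification.
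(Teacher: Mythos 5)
Your proof is correct and follows essentially the same route as the paper's: a monomial-by-monomial comparison using that edges and pairs not involving $a,c$ are unaffected, that the $\BZ_2$ identity $\bar i_e\bar i_a+\bar i_e\bar i_c=0$ disposes of the edges paired with both $a$ and $c$, and that the demoted quadratic term $\bar i_a\bar i_c$ supplies the linear term of the merged edge $a/c$. If anything, your parity/intersection-number arguments for facts (ii) and (iii) are more complete than the paper's proof, which verifies a single arrangement of the four edge-ends explicitly and dismisses the remaining arrangements (and the degenerate coincidences you flag) as similar.
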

\begin{proof}\ \\
First assume the arrangement of the end points of the four arrows is as follows,
\[
  \begin{tikzpicture}[baseline={([yshift=-.5ex]current bounding box.center)},decoration={markings, mark= at position .55 with {\arrow{stealth}}},scale=2.5]
    \draw[->,thick] (-1,0) --  (1,0);
    \draw (-.2,.3) node {a};
    \draw (-.4,.3) node {b};
    \draw (.2, .3) node {c};
    \draw (.4, .3) node {d};
    \draw[blue,postaction={decorate}] (-.8,0) ..controls(-.4,.3) .. (-.1,0);
    \draw[blue,postaction={decorate}] (-.1,0) ..controls(.2,.3) .. (.5,0);
    \draw[blue,postaction={decorate}] (-.5,0) ..controls(-.2,.3) .. (.1,0);
    \draw[blue,postaction={decorate}] (.1,0) ..controls(.4,.3) .. (.8,0);
    \draw (0,-.2) node { $\sigma$};
  \end{tikzpicture}\ \ \
    \begin{tikzpicture}[baseline={([yshift=-.5ex]current bounding box.center)},decoration={markings, mark= at position .55 with {\arrow{stealth}}},scale=2.5]
    \draw[->,thick] (-1,0) --  (1,0);
    \draw (0,.3) node {a/c};
    \draw (-.4,.3) node {b};
    \draw (.4, .3) node {d};
    \draw[blue,postaction={decorate}] (-.8,0) ..controls(-.4,.3) .. (-0,0);
    \draw[blue,postaction={decorate}] (-.5,0)..controls(0,.3) .. (.5,0);
    \draw[blue,postaction={decorate}] (0,0) ..controls(.4,.3) .. (.8,0);
    \draw (0,-.2) node { $\tilde\sigma$};
  \end{tikzpicture}
\]Then
  \begin{enumerate}
    \item The pairs of the other edges except $a,b,c,d$ are not changed.
    \item The pairs of $b,d$ with the other edges are not changed.
    \item The other edges making a distinguished pair with only $a$ or $c$
    will also make a distinguished pair with $a/c$. And the edges making distinguished pairs
    both with $a$ and $c$ will not make a distinguished pair with $a/c$. However,
     since $\bar i_a=\bar i_c$ and the field is $\BZ_2$, we have $\bar i_x\bar i_a+\bar i_x\bar i_c=0$. Therefore, these cases do not differ.
    \item We only need to consider the relationship between $a,b,c,d$
      \begin{enumerate}
        \item the linear term: since the edges $a$ and $c$ turn into a longer edge, the difference in the linear term is $\bar i_a$.
        \item the difference in the quadratic term is $\bar i_a\bar i_c=\bar i_a$ as well.
      \end{enumerate}
  \end{enumerate}
Summing everything together, we obtain no difference.

For the other arrangements of the end points of the four arrows the calculation similar, and we skip the rest of the proof.
\end{proof}

\begin{equation*}
  \begin{tikzpicture}[baseline={([yshift=-.5ex]current bounding box.center)},decoration={markings, mark= at position .55 with {\arrow{stealth}}}]
    \draw[->,thick] (-1,0) --  (1,0);
    \fill[black] (-.3,0) circle (1pt) node[below] {\tiny k};
    \fill[black] (.3,0) circle (1pt) node[below] {\tiny k+1};
    \draw (-.5,.8) node[left] {a};
    \draw (-.5,-.8) node[left] {b};
    \draw (.5,.8) node[right] {c};
    \draw (.5,-.8) node[right] {d};
    \draw[blue,postaction={decorate}] (-.5,.8) -- (.3,0);
    \draw[blue,postaction={decorate}] (-.3,0) -- (.5,.8);
    \draw[blue,postaction={decorate}] (-.5,-.8) -- (-.3,0);
    \draw[blue,postaction={decorate}] (.3,0) -- (.5,-.8);
    \draw (0,-1.2) node { $\sigma$};
  \end{tikzpicture}-
  \begin{tikzpicture}[baseline={([yshift=-.5ex]current bounding box.center)},decoration={markings, mark= at position .55 with {\arrow{stealth}}}]
    \draw[->,thick] (-1,0) --  (1,0);
    \fill[black] (.3,0) circle (1pt) node[below] {\tiny k+1};
    \fill[black] (-.3,0) circle (1pt) node[below] {\tiny k};
    \draw (-.5,.8) node[left] {a};
    \draw (-.5,-.8) node[left] {b};
    \draw (.5,.8) node[right] {c};
    \draw (.5,-.8) node[right] {d};
    \draw[blue,postaction={decorate}] (-.5,.8) -- (-.3,0);
    \draw[blue,postaction={decorate}] (.3,0) -- (.5,.8);
    \draw[blue,postaction={decorate}] (-.5,-.8) -- (.3,0);
    \draw[blue,postaction={decorate}] (-.3,0) -- (.5,-.8);
    \draw (0,-1.2) node { $\sigma'$};
  \end{tikzpicture}=
  \begin{tikzpicture}[baseline={([yshift=-.5ex]current bounding box.center)},decoration={markings, mark= at position .55 with {\arrow{stealth}}}]
    \draw[->,thick] (-1,0)  -- (1,0);
    \fill[black] (0,0) circle (1pt) node[above] {\tiny k'};
    \draw (-.5,.8) node[left] {a};
    \draw (-.5,-.8) node[left] {b};
    \draw (.5,.8) node[right] {c};
    \draw (.5,-.8) node[right] {d};
    \draw[blue,postaction={decorate}] (-.5,.8) ..controls (0,.4) .. (.5,.8);
    \draw[blue,postaction={decorate}] (-.5,-.8) -- (0,0);
    \draw[blue,postaction={decorate}] (0,0) -- (.5,-.8);
    \draw (0,-1.2) node { $\tilde\sigma$};
  \end{tikzpicture}-
  \begin{tikzpicture}[baseline={([yshift=-.5ex]current bounding box.center)},decoration={markings, mark= at position .55 with {\arrow{stealth}}}]
    \draw[->,thick] (-1,0) --  (1,0);
    \fill[black] (0,0) circle (1pt) node[below] {\tiny k'};
    \draw (-.5,.8) node[left] {a};
    \draw (-.5,-.8) node[left] {b};
    \draw (.5,.8) node[right] {c};
    \draw (.5,-.8) node[right] {d};
    \draw[blue,postaction={decorate}] (-.5,-.8) ..controls (0,-.4) .. (.5,-.8);
    \draw[blue,postaction={decorate}] (-.5,.8) -- (0,0);
    \draw[blue,postaction={decorate}] (0,0) -- (.5,.8);
    \draw (0,-1.2) node { $\tilde\sigma'$};
  \end{tikzpicture}
\end{equation*}

Now let us look back at the Lie superbracket
\[
   [E_{ij},E_{kl}]=E_{ij}E_{kl}-(-1)^{(\bar i +\bar j)(\bar k +\bar l)}E_{kl}E_{ij}=\delta_{jk}E_{il}-(-1)^{(\bar i +\bar j)(\bar k +\bar l)}\delta_{il}E_{kj}
\]
We produce everything we need to let the first term be $w_{\GL(m|n)}(\sigma)$:
\begin{align*}
  & \sum(-1)^{f_\sigma}\cdots E_{ij}E_{kl}\cdots-\sum (-1)^{f_\sigma+(\bar i +\bar j)(\bar k +\bar l)}\cdots E_{kl}E_{ij}\cdots\\
   =&\sum \delta_{jk}(-1)^{f_\sigma}\cdots E_{il}\cdots-\sum \delta_{il}(-1)^{f_\sigma+(\bar i +\bar j)(\bar k +\bar l)}\cdots E_{kj}\cdots
\end{align*}
Using Lemma \ref{le:1} and Lemma \ref{le:2}, we obtain
\begin{align*}
  & \sum(-1)^{f_\sigma}\cdots E_{ij}E_{kl}\cdots-\sum (-1)^{f_{\sigma'}}\cdots E_{kl}E_{ij}\cdots\\
   =&\sum (-1)^{f_{\tilde\sigma}}\cdots E_{il}\cdots-\sum(-1)^{f_{\tilde\sigma'}}\cdots E_{kj}\cdots'
\end{align*}
which is
\[
  w_{\GL(m|n)}(\sigma)-w_{\GL(m|n)}(\sigma')=w_{\GL(m|n)}(\tilde\sigma)-w_{\GL(m|n)}(\tilde\sigma').
\]
It is the same Recurrence Rule as for $w_{\GL(m-n)}$.

For the special case $\sigma(k+1)=k$, the recurrence looks like
\begin{equation*}
  \begin{tikzpicture}[baseline={([yshift=-.5ex]current bounding box.center)},decoration={markings, mark= at position .55 with {\arrow{stealth}}}]
    \draw[->,thick] (-1,0) --  (1,0);
    \fill[black] (.3,0) circle (1pt) node[below] {\tiny k+1};
    \fill[black] (-.3,0) circle (1pt) node[below] {\tiny k};
    \draw (-.5,.8) node[left] {a};
    \draw (.5,.8) node[right] {b};
    \draw[blue,postaction={decorate}] (-.5,.8) -- (-.3,0);
    \draw[blue,postaction={decorate}] (.3,0) -- (.5,.8);
    \draw[blue,postaction={decorate}] (-.3,0) ..controls(0,-.3).. (.3,0);
    \draw (0,-0.7) node { $\sigma$};
  \end{tikzpicture}-
  \begin{tikzpicture}[baseline={([yshift=-.5ex]current bounding box.center)},decoration={markings, mark= at position .55 with {\arrow{stealth}}}]
    \draw[->,thick] (-1,0) --  (1,0);
    \fill[black] (-.3,0) circle (1pt) node[below] {\tiny k};
    \fill[black] (.3,0) circle (1pt) node[below] {\tiny k+1};
    \draw (-.5,.8) node[left] {a};
    \draw (.5,.8) node[right] {b};
    \draw[blue,postaction={decorate}] (-.5,.8) -- (.3,0);
    \draw[blue,postaction={decorate}] (-.3,0) -- (.5,.8);
    \draw[blue,postaction={decorate}] (.3,0) ..controls(0,-.3).. (-.3,0);
    \draw (0,-0.7) node { $\sigma'$};
  \end{tikzpicture}=(m-n)\times
  \begin{tikzpicture}[baseline={([yshift=-.5ex]current bounding box.center)},decoration={markings, mark= at position .55 with {\arrow{stealth}}}]
    \draw[->,thick] (-1,0) --  (1,0);
    \fill[black] (0,0) circle (1pt) node[above] {\tiny k'};
    \draw (-.5,.8) node[left] {a};
    \draw (.5,.8) node[right] {b};
    \draw[blue,postaction={decorate}] (-.5,.8) -- (0,0);
    \draw[blue,postaction={decorate}] (0,0) -- (.5,.8);
    \draw (0,-0.4) node { $\tilde\sigma$};
    \end{tikzpicture}-C_1\times
  \begin{tikzpicture}[baseline={([yshift=-.5ex]current bounding box.center)},decoration={markings, mark= at position .55 with {\arrow{stealth}}}]
    \draw[->,thick] (-1,0)  -- (1,0);
    \draw (-.5,.8) node[left] {a};
    \draw (.5,.8) node[right] {b};
    \draw[blue,postaction={decorate}] (-.5,.8) ..controls (0,.4) .. (.5,.8);
    \draw (0,-0.4) node { $\tilde\sigma'$};

  \end{tikzpicture}
\end{equation*}
We have the relationship
\[
  E_{ij}E_{jk}-(-1)^{(\bar i +\bar j)(\bar j +\bar k)}E_{jk}E_{ij}=\delta_{jj}E_{ik}-(-1)^{(\bar i +\bar j)(\bar j +\bar k)}\delta_{ik}E_{jj}
\]
We produce everything we need to let the first term be $w_{\GL(m|n)}(\sigma)$:
\begin{align*}
  & \sum(-1)^{f_\sigma}\cdots E_{ij}E_{jk}\cdots-\sum (-1)^{f_\sigma+(\bar i +\bar j)(\bar j +\bar k)}\cdots E_{jk}E_{ij}\cdots\\
   =&\sum \delta_{jj}(-1)^{f_\sigma}\cdots E_{ik}\cdots-\sum \delta_{ik}(-1)^{f_\sigma+(\bar i +\bar j)(\bar j +\bar k)}\cdots E_{jj}\cdots
\end{align*}
We have
\begin{align*}
  & \sum(-1)^{f_\sigma}\cdots E_{ij}E_{jk}\cdots-\sum (-1)^{f_{\sigma'}}\cdots E_{jk}E_{ij}\cdots\\
   =&\sum (-1)^{\bar j +f_{\tilde\sigma}}\cdots E_{ik}\cdots-\sum(-1)^{f_{\tilde\sigma'}}\cdots E_{jj}\cdots\\
   =&(m-n)\sum (-1)^{f_{\tilde\sigma}}\cdots E_{ik}\cdots-C_1\sum(-1)^{f_{\tilde\sigma'}}\cdots 1\cdots,
\end{align*}
hence $w_{\GL(m|n)}$ satisfies the special case of the Recurrence Rule with number $m-n$.

Finally, $w_{\GL(m|n)}$ obeys the same Recurrence Rule as $w_{\GL}$ with $C_0=m-n$.

\end{document}